\date{}          %  avoid date
\newcommand{\be}{\begin{equation}}
\newcommand{\ee}{\end{equation}}
\newcommand{\bi}{\begin{itemize}}
\newcommand{\ei}{\end{itemize}}
\newcommand{\bea}{\begin{eqnarray*}}
\newcommand{\eea}{\end{eqnarray*}}
\newcommand{\ba}{\begin{array}}
\newcommand{\ea}{\end{array}}
\newcommand{\ZZ}{\mathbb{Z}}
\newcommand{\RR}{\mathbb{R}}
\newcommand{\NN}{\mathbb{N}}
\newcommand{\linf}{\ell_{\infty}(\ZZ)}
\newcommand{\bs}{\bar{s}}
\newcommand{\supp}{\text{supp}}
\newtheorem{theorem}{Theorem}[section]
\newtheorem{lemma}[theorem]{Lemma}
\newtheorem{proposition}[theorem]{Proposition}
\newtheorem{remark}[theorem]{Remark}
\numberwithin{equation}{section}
\title{B-spline normal multi-scale transforms for planar curves}
\author{Stanislav Harizanov \thanks{The author is partially supported by the Bulgarian fund "Nauchni Izsledvania" under grant DFNI-T01/0001. Project title: "Effective methods and algorithms for geometric modeling".} \\ TU Kaiserslautern}
\begin{document}
\maketitle

\begin{abstract}
Normal multi-scale transform \cite{guskov00normal} is a nonlinear multi-scale transform for representing geometric objects that has been recently investigated \cite{DRS,HOS,Oswald}. The restrictive role of the exact order of polynomial reproduction $P_e$ of the approximating subdivision operator $S$ in the analysis of the $S$ normal multi-scale transform, established in \cite[Theorem 2.6]{HOS}, significantly disfavors the practical use of these transforms whenever $P_e\ll P$. We analyze in detail the normal multi-scale transform for planar curves based on B-spline subdivision scheme $S_p$ of degree $p\ge3$ and derive higher smoothness of the normal re-parameterization than in \cite{HOS}. We show that further improvements of the smoothness factor are possible, provided the approximate normals are cleverly chosen. Following \cite{Oswald}, we introduce a more general framework for those transforms where more than one subdivision operator can be used in the prediction step, which leads to higher detail decay rates.
\\[1ex]
AMS classification: 65D17, 65D15, 65T60, 26A16
\\[1ex]
Key words and phrases: Nonlinear geometric multi-scale transforms, B-spline subdivision schemes, Lipschitz smoothness, curve representation, detail decay estimates
\end{abstract}

\section{Introduction}\label{sec1}
Let $\mathcal{C}$ be a continuous planar curve, $\mathbf{v}^0$ be a finite sequence of distinct points on it, and $S$ be a univariate, local, shift-invariant subdivision operator. Starting with $\mathbf{v}^0$ we recursively generate finer scale representations $\mathbf{v}^j$, $j\ge1$ of $\mathcal{C}$, applying first the operator $S$ component-wise to the coarse-scale sequence $\mathbf{v}^{j-1}$ and then adjusting the fine-scale data so that they lie on the curve. The latter is done via moving the predicted points $S\mathbf{v}^{j-1}$ along a priori given directions $\hat{\mathbf{n}}^j$ (again generated from the coarse-scale data $\mathbf{v}^{j-1}$) until $\mathcal{C}$ is hit. Analytically, one-level refinement is described by
\be\label{eq:Normal MT}
\mathbf{v}^{j}=S\mathbf{v}^{j-1}+d^{j}\hat{\mathbf{n}}^j,\qquad\forall j\in\NN,
\ee
and is illustrated on Fig.~\ref{fig:Normal}. Such a transform decomposes $\mathcal{C}$ into its multi-scale components $\{\mathbf{v}^0,d^1,d^2,\dots\}$, and vice versa allows for complete reconstruction of $\mathcal{C}$ from them. It is called {\em normal}, because whenever $\mathcal{C}\in G^1$, the unit vector $\hat{\mathbf{n}}^j_i$ mimics the true normal of $\mathcal{C}$ at $\mathbf{v}^j_i$ for all admissible $i$. Unlike classical multi-scale transforms (MTs), the normal MTs for curves deal with scalar detail sequences $d^j$, allowing more efficient computational processing of the data. On the other hand, the different nature of data and details together with the nonlinear dependency of $\hat{\mathbf{n}}^j$ on $\mathbf{v}^{j-1}$ make the $S$ normal MT nonlinear, even when $S$ is linear, and thus its mathematical analysis is nontrivial and nonclassical.

\begin{figure}[htp]
\begin{center}
\includegraphics[width=0.6\textwidth]{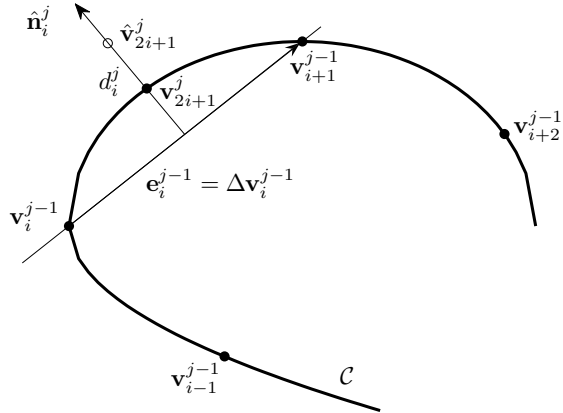}
\caption{One step of a normal MT for interpolatory $S$} \label{fig:Normal}
\end{center}
\end{figure}

In \cite{HOS}, such an analysis for linear $S$ has already been performed and the general result there (Theorem 2.6) states that both the detail decay rate and the smoothness of the normal re-parametrization of the normal MT depend on the exact order of polynomial reproduction $P_e$ of $S$.
%For interpolatory $S$, $P_e=P$, where by $P$ we denote the order of polynomial reproduction of $S$, thus \cite[Theorem 2.6]{HOS} is in line with \cite[Theorem 6.3]{DRS}. For approximating $S$ only polynomial curves of order up to $P_e$ are preserved (i.e., if $\mathcal{C}=\{(x,p(x))\;|\;x\in[a,b]\subset\RR,\;p\in\Pi_{P_e+1}\}$ and $\mathbf{v}^0\in\mathcal{C}$, then $d^j=\mathbf{0},\;\forall j\in\NN$), and the appearance of $P_e$ in the analysis of the detail decay rate is natural.
While the appearance of $P_e$ in the detail analysis is natural, this is not the case for the smoothness analysis, where intuitively only the smoothness of the underlined objects $\mathcal{C}$ and $S$ should matter.
%and thus $P$ rather than $P_e$ should be involved (see for example \cite[Lemma 2.1]{HOS} for connection between $P$ and the smoothness of $S$).
The main reason for the appearance of $P_e$ there is that the proof of \cite[Theorem 2.6]{HOS} is based on a bootstrapping argument, where detail and smoothness analysis are tightly coupled via Lemmas 3.4 and 3.5, even though it is well known \cite{KSS:geometryCompression} that geometry and parameter information are locally disconnected. So is $P_e$ indeed a true constraint for the smoothness properties of the normal re-parametrization?
%or does it appear only as a result of the way the latter were analyzed?

In this paper we analyze the normal MTs associated to the family of subdivision operators $\{S_p\}_{p\ge3}$, generating B-splines of degree $p$ in the limit. Our research is motivated by several reasons: Firstly,
%since $2=P_e\ll P=p+1$, for $p\gg1$,
$P_e=2$ independently on $p$ and such a case study may help for investing the exact role of $P_e$ in the general smoothness analysis. Secondly, the restrictive role of $P_e=2$ significantly disfavors the practical use of these transforms, while at the same time because of their nice geometric and analytic properties, the $S_p$ operators are still among the most favorable schemes considered in applications. Finally, our work can be viewed as a continuation of \cite{Runborg, Harizanov}, where the $S_1$ and the $S_2$ normal MTs have been analyzed, respectively.

We prove $C^{2,1-}$ guaranteed smoothness of the normal re-parameterization of the $S_p$ normal MT, $p\ge3$, improving on the $C^{1,1}$ smoothness derived from the general theory in \cite{HOS}. Furthermore, using special normal directions $\hat{\mathbf{n}}^j$, namely those generated from $\Delta\mathbf{v}^{j-1}$ via the operator $S_{p-2}$, we manage to further increase that smoothness to $C^{3,1-}$ for the what we will call $(S_p,S_{p-2})$ normal MTs, $p\ge4$. Last, but not least, following \cite{Oswald}, we introduce the notion of combined transforms and show that such $(S_p,S_{p-2},T)$ normal MTs, $p\ge3$, lead to detail decay rate 4, provided $T$ is suitably chosen. For comparison, the optimal detail decay rate for the pure $S_p$ normal MT is 2.

The paper is organized as follows: In Section~\ref{sec2} the main definitions and analytical tools that will be later used are explained. In Section~\ref{sec3} the smoothness analysis for the $S_p$ normal MTs is performed. In Section 4, combined normal MTs are defined and analysis on the detail decay rate of the $(S_p,S_{p-2},T)$ normal MTs is executed.
%A brief summary of the results and concluding remarks can be found in Section~\ref{sec5}.

%%%%%%%%%%%%%%%%%%%%%%%%%%%%%%%%%%%%%%%%%%%%%%%%%%%%%%%%%%%%%%%%%%%%%%%%%%%%%%%%%%%%%%%%%
%%%%%%%%%%%%%%%%%%%%%%%%%%%%%%%%%%%%%%%%%%%%%%%%%%%%%%%%%%%%%%%%%%%%%%%%%%%%%%%%%%%%%%%%%
%%%%%%%%%%%%%%%%%%%%%%%%%%%%%%%%%%%%%%%%%%%%%%%%%%%%%%%%%%%%%%%%%%%%%%%%%%%%%%%%%%%%%%%%%

%%%%%%%%%%%%%%%%%%%%%%%%%%%%%%%%%%%%%%%%%%%%%%%%%%%%%%%%%%%%%%%%%%%%%%%%%%%%%%%%%%%%%%%%%
%%%%%%%%%%%%%%%%%%%%%%%%%%%%%%%%%%%%%%%%%%%%%%%%%%%%%%%%%%%%%%%%%%%%%%%%%%%%%%%%%%%%%%%%%
%%%%%%%%%%%%%%%%%%%%%%%%%%%%%%%%%%%%%%%%%%%%%%%%%%%%%%%%%%%%%%%%%%%%%%%%%%%%%%%%%%%%%%%%%
\section{Notation, framework, and preliminaries}\label{sec2}

Throughout this paper boldfaced letters are used for vector-valued quantities
(points in $\RR^2$, sequences of such points, or $\RR^2$-valued functions). The notation $|\cdot|$ is used both for the absolute value in $\RR$ and the Euclidean norm in $\RR^2$ which should be clear from the context, while $\|\cdot\|$ is used for both the $\ell_{\infty}$-norm and its associated operator norm. We use also $\|\cdot\|_I$ for the norm of the corresponding subsequence, restricted to the index set $I\subset\ZZ$. The scalar product of two vectors $\mathbf{a}, \mathbf{b}$ in $\RR^2$ is simply denoted by $\mathbf{a}\mathbf{b}$. If $a,b$ are sequences in $\RR$, and $\mathbf{b}$ is a sequence in $\RR^2$ then $ab:=(a_i b_i)_{i\in\ZZ}$, $a\mathbf{b}:=(a_i\mathbf{b}_i)_{i\in\ZZ}$ for short.

As in \cite{HOS}, let $\mathcal{C}$ be a rectifiable, non-self-intersecting, closed planar curve, given via its regular $C^1$ arc-length parametrization $\mathbf{v}:[0,L]\to\RR^2$,
%$\mathbf{v}'(s)\neq 0,\;\forall s\in[0,L]$,
with $L$ being the length of $\mathcal{C}$. Closedness of $\mathcal{C}$ avoids dealing with boundaries, while self-intersecting curves can be partitioned into (finitely many) non-self-intersecting components, meaning that both these assumptions are not crucial for the future analysis but just conveniently simplify the setup. Under ``smoothness of $\mathcal{C}$'' we understand the one of $\mathbf{v}(s)$, and we use the H{\"o}lder spaces $C^{k,\alpha}$, with $k\in\NN,\; \alpha\in(0,1]$. The role of the smoothness of $\mathcal{C}$ in the analysis of the corresponding normal MTs has already been established \cite{DRS,HOS} and we do not elaborate in this direction. Although in the following sections different spaces $C^{k,\alpha}$ will appear, we introduce them only for the sake of completeness and for stating the results in their full generality. However, for the purposes of this paper the reader may always think of $\mathcal{C}$ as being ``smooth enough'' or even $C^{\infty}$.

Given a finite number $N$ of distinct points on $\mathcal{C}$: $\mathbf{v}^0_i=\mathbf{v}(s^0_i)$, $i=0,\ldots,N-1$, such that
$s_0^0<s_1^0<\ldots<s_{N-1}^0< s_0^0+L$, we extend both the sequences $\mathbf{v}^0$ and $s^0$ periodically over $\ZZ$ via $\mathbf{v}^0_{i+N}=\mathbf{v}_i^0$ and  $s^0_{i+N}=s^0_i+L$ for all $i\in \ZZ$. The subdivision operator $S_p:\linf\to\linf$ is defined in a recursive way, using the Lane-Riesenfeld \cite{Lane-Riesenfeld} algorithm:
\be\label{eq:Sp}
(S_0x)_{2i}=(S_0x)_{2i+1}=x_i,\qquad (S_p x)_i=\frac{(S_{p-1} x)_i+(S_{p-1}x)_{i+1}}{2},\quad i\in\ZZ.
\ee
For the point-mass sequence $\delta$ we have that the limit function $S_p^{\infty}\delta$ is the cardinal B-spline of degree $p$ with support $[0,p+1]$ and thus $S_p$ generates
%B-splines of degree $p$, thus
$C^{p-1,1}$ limits.
%The latter implies that the smoothness $s_{\infty}(S_p)$ of $S_p$ equals $p-1+1$ (note that this differs from $s_\infty(S_p)=p$, since the limit functions are only $p-1$ times differentiable with Lipschitz continuous $p-1$st derivative), while
The order of polynomial reproduction $P$ of $S_p$ is $P=p+1$, meaning that for any polynomial $\mathfrak{p}\in\Pi_{p}$ of degree at most $p$ there exists a polynomial $\mathfrak{q}\in\Pi_{p-1}$ with $\deg(\mathfrak{q}) < \deg(\mathfrak{p})$ such that
\be\label{PDef}
S_p(\mathfrak{p}|_{\ZZ}) = \mathfrak{p}|_{2^{-1}\ZZ} + \mathfrak{q}|_{2^{-1}\ZZ}.
\ee
Here, $\mathfrak{p}|_{\ZZ}$ denotes the sequence $(\mathfrak{p}(i))_{i\in\ZZ}$, and $\mathfrak{p}|_{2^{-1}\ZZ}$ the sequence
$(\mathfrak{p}(i/2))_{i\in\ZZ}$. On the other hand, (\ref{eq:Sp}) guarantees a positive mask for $S_p$, which leads to an exact order of polynomial reproduction $P_e=2$ (see \cite{HOS}), meaning that for any linear polynomial $\mathfrak{p}$
\be\label{PeDef}
S(\mathfrak{p}|_{\ZZ}) = \mathfrak{p}|_{2^{-1}\ZZ+c_{S_p}}=(\mathfrak{p}(i/2+c_{S_p}))_{i\in\ZZ},
\ee
with some shift parameter $c_{S_p}$. The following properties of $S_p$ are instrumental for our analysis:

\begin{lemma}\label{lemma:Sp}
\begin{itemize}
\item[(i)] The derived subdivision operators of $S_p$ are characterized via
\be\label{eq:DerivedSp}
S^{[q]}_p=\frac1{2^{q}}S_{p-q},\qquad\forall p\ge q\ge0.
\ee
\item[(ii)] For the shift parameter $c_{S_p}$ we have
\be\label{eq:c(Sp)}
c_{S_p}=\frac{p-1}{4},\qquad\forall p\ge1,
\ee
while for the quadratic and cubic polynomial samples, and $p\ge2$ and $p\ge3$ we have
\be\label{eq:P=2 and 3}
S_p(t^2|_{\ZZ})=\left(t^2+\frac{p+1}{16}\right)\Big|_{2^{-1}\ZZ+c_{S_{p}}};\qquad
S_p(t^3|_{\ZZ})=\left(t^3+\frac{3(p+1)}{16}t\right)\Big|_{2^{-1}\ZZ+c_{S_{p}}},
\ee
respectively.
\end{itemize}
\end{lemma}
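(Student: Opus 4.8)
The plan is to handle the two parts by exploiting the Lane--Riesenfeld factorization $S_p=A\,S_{p-1}$, where $A$ is the averaging operator $(Ay)_i=\tfrac12(y_i+y_{i+1})$ read off directly from (\ref{eq:Sp}); equivalently, at the level of masks, $S_p$ has symbol $a_p(z)=2^{-p}(1+z)^{p+1}$.

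For part (i) I would argue through this symbol. Passing to the derived operator removes one factor $(1+z)$ (up to a harmless monomial shift $z$, which only relabels indices), so that $a^{[1]}_p(z)=\tfrac{z}{2}\,a_{p-1}(z)$ and hence $S^{[1]}_p=\tfrac12 S_{p-1}$ --- the operator shadow of the classical fact that differencing a degree-$p$ cardinal B-spline returns a scaled degree-$(p-1)$ one. Since the derived-operator map respects scalar multiples, an induction on $q$ then gives $S^{[q]}_p=(S^{[q-1]}_p)^{[1]}=\bigl(\tfrac{1}{2^{q-1}}S_{p-q+1}\bigr)^{[1]}=\tfrac{1}{2^{q}}S_{p-q}$, with the trivial base $q=0$. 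One may equally verify $\Delta S_p=\tfrac12 S_{p-1}\Delta$ in two lines straight from the averaging recursion.

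For part (ii) I would induct on $p$, tracking how $A$ acts on the monomial samples. Writing $u=i/2+c_{S_{p-1}}$, the two abscissae that $A$ averages are $u$ and $u+\tfrac12$, with midpoint $u+\tfrac14$; for $\mathfrak p(t)=t$ this forces $c_{S_p}=c_{S_{p-1}}+\tfrac14$, and since $S_1$ is the interpolating piecewise-linear scheme we have the base value $c_{S_1}=0$, giving $c_{S_p}=(p-1)/4$. Carrying the ansatz $(S_{p-1}(t^2|_{\ZZ}))_i=u^2+\gamma_{p-1}$ with a parity-independent constant $\gamma_{p-1}$, one computes $\tfrac12\bigl(u^2+(u+\tfrac12)^2\bigr)=(u+\tfrac14)^2+\tfrac1{16}$, so $\gamma_p=\gamma_{p-1}+\tfrac1{16}$, which matches the claimed $\gamma_p=\tfrac{p+1}{16}$. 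For $t^3$ the same averaging, re-centred at $w=u+\tfrac14=i/2+c_{S_p}$, annihilates the resulting quadratic and constant terms and leaves the single recursion $\beta_p=\beta_{p-1}+\tfrac{3}{16}$ for the coefficient $\beta_p=\tfrac{3(p+1)}{16}$ of the residual linear defect.

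The one place that needs genuine care --- and the reason the two formulas are restricted to $p\ge2$ and $p\ge3$ --- is the base of the induction. The recursions above presuppose that the defect is already \emph{parity-independent}, whereas for $p=1$ the defect of $S_p(t^2|_{\ZZ})$ takes different values on even and odd indices (and similarly the reduced form for $t^3$ first appears at $p=3$); a single averaging step symmetrises this. I would therefore settle the base cases by evaluating $S_2(t^2|_{\ZZ})$, respectively $S_3(t^3|_{\ZZ})$, explicitly at one even and one odd index, checking that the defect is constant with the predicted value, and then let the recursions run for all larger $p$. I expect this base-case bookkeeping, rather than any conceptual difficulty, to be the main obstacle.
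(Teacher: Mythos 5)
Your proposal is correct, and it reaches the same conclusions by a route that differs from the paper's in its details. For part (i) the paper avoids symbols entirely: it verifies $S_1^{[1]}=\tfrac12 S_0$ by hand and then runs an element-wise induction on $p$ using the averaging recursion (\ref{eq:Sp}), followed by a second induction on $q$ --- exactly the ``two-line verification of $\Delta S_p=\tfrac12 S_{p-1}\Delta$'' that you offer as a fallback. One caution about your primary symbol argument: with the paper's non-centered indexing the symbol is $a_p(z)=2^{-p}z^{-p}(1+z)^{p+1}$, not $2^{-p}(1+z)^{p+1}$, and the correct derived-symbol relation $a_p^{[1]}(z)=\tfrac{z}{1+z}a_p(z)$ then yields $\tfrac12 a_{p-1}(z)$ with \emph{no} residual monomial factor. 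The spurious factor $z$ in your version is an artifact of your normalization, and it matters here: the lemma asserts the exact identity $S_p^{[1]}=\tfrac12S_{p-1}$ (used later, e.g., in $S_{p-2}\Delta\mathbf{v}^{j-1}=\tfrac12\Delta S_{p-1}\mathbf{v}^{j-1}$), so ``relabels indices'' cannot simply be waved away; either track the monomials precisely or fall back on the direct recursion. For part (ii), your derivation of $c_{S_{p+1}}=c_{S_p}+\tfrac14$ from $c_{S_1}=0$ is identical to the paper's. For the quadratic and cubic defects the paper argues differently: it invokes the central symmetry of the mask together with the oddness/evenness of $t^3$ and $t^2$ to conclude a priori that $\mathfrak q$ is a constant $c_1$, respectively a linear term $c_2t$, and then leaves the evaluation $c_1=(p+1)/16$, $c_2=3(p+1)/16$ as an exercise. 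Your telescoping recursions $\gamma_p=\gamma_{p-1}+\tfrac1{16}$ and $\beta_p=\beta_{p-1}+\tfrac3{16}$, anchored at explicitly computed base cases $p=2$ and $p=3$, actually supply that omitted computation, and your observation that the defect is parity-dependent at $p=1$ (values $0$ and $\tfrac14$) correctly explains the restrictions $p\ge2$ and $p\ge3$ in the statement. The trade-off is that the paper's symmetry argument identifies the \emph{form} of the defect for all $n\le p$ at once (cf.\ the remark following the lemma), whereas your recursion is more self-contained and delivers the constants with no appeal to symmetry.
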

\begin{proof}
For $(i)$, recall that the first derived scheme $S_p^{[1]}$ is defined via the identity $S_p\circ\Delta=\Delta\circ S_p^{[1]}$, where $\Delta x_i=x_{i+1}-x_i$ is the forward finite difference operator, and recursively $S_p^{[q]}$ satisfies $S_p\circ\Delta^q=\Delta^q\circ S_p^{[q]}$. We prove (\ref{eq:DerivedSp}) for $q=1$, using induction on $p$. Since
$$(S^{[1]}_1\Delta x)_{2i}=(S^{[1]}\Delta x)_{2i+1}=\frac{\Delta x_i}{2}\quad\Longrightarrow\quad S^{[1]}_1=\frac12 S_0,$$
and assuming that $S^{[1]}_p=S_{p-1}/2$ we deduce from (\ref{eq:Sp}) that
$$(S^{[1]}_{p+1}\Delta x)_i=(\Delta S_{p+1}x)_i=\frac{(\Delta S_p x)_i+(\Delta S_p x)_{i+1}}{2}=\frac{\frac12S_{p-1}\Delta x_i+\frac12S_{p-1}\Delta x_{i+1}}{2}=\left(\frac12S_p\Delta x\right)_i.$$
Applying another induction argument, this time on $q$, we complete the proof.

For the proof of $(ii)$ we again heavily use (\ref{eq:Sp}). For the first part, we consider the linear polynomial sample $t|_{\ZZ}$, and since $P_e=2$ and $c_{S_1}=0$ ($S_1$ is interpolatory) (\ref{eq:c(Sp)}) follows from
\bea
t|_{1/2\ZZ+c_{S_{p+1}}}=S_{p+1}(t|_{\ZZ})=\frac{t|_{1/2\ZZ+c_{S_{p}}}+t|_{1/2\ZZ+c_{S_{p}}+1/2}}{2}=t|_{1/2\ZZ+c_{S_p}+1/4}\quad\Rightarrow\quad c_{S_{p+1}}=c_{S_p}+\frac14.
\eea
To show (\ref{eq:P=2 and 3}) we also use the central symmetry in the mask of $S_p$ and the symmetry of both $t^2$ and $t^3$ with respect to the origin. Following the notation in (\ref{PDef}) we conclude that for $\mathfrak{p}(t)=t^2$, $\mathfrak{q}(t)=c_1$, while for $\mathfrak{p}(t)=t^3$, $\mathfrak{q}(t)=c_2t$, where both $c_1$ and $c_2$ are functions only on $p$. Deriving that $c_1=(p+1)/16$ and $c_2=3(p+1)/16$ is an easy exercise that is left to the reader.
\end{proof}

Some remarks are in order. Generally, it is enough to deal with only two shift parameters, namely $c_S=0$ for primal schemes, and $c_S=1/4$ for dual ones. The result in Lemma~\ref{lemma:Sp} does not contradict with that and is affected by the recursive formulation of $S_p$, which shifts the refined data to the right, e.g., $(S_3x)_{2i}=(x_i+x_{i+1})/2=(S_1x)_{2i+1}$. In standard definitions the centered versions $S'_p$ of the operators, e.g., $(S'_3x)_i=(S_3x)_{i-1}$ are usually used. However, (\ref{eq:Sp}) is in the core of our analysis and we prefer to work with such non-centered formulation of $S_p$, instead. Due to linearity, direct computations show that (\ref{eq:P=2 and 3}) remains also valid for arbitrary shifts $\alpha\in\RR$ of the data:
\be\label{eq:P=2 and 3 generalized}
\begin{split}
S_p((t+\alpha)^2|_{\ZZ})&=S_p(t^2|_{\ZZ+\alpha})=\left(t^2+\frac{p+1}{16}\right)\Big|_{2^{-1}\ZZ+\alpha+c_{S_{p}}};\\
S_p((t+\alpha)^3|_{\ZZ})&=S_p(t^3|_{\ZZ+\alpha})=\left(t^3+\frac{3(p+1)}{16}t\right)\Big|_{2^{-1}\ZZ+\alpha+c_{S_{p}}}.
\end{split}
\ee
Note that Lemma~\ref{lemma:Sp}(ii) can be further extended to
$$S_p(t^n|_{\ZZ})=(t^n+A_{2}t^{n-2}+A_{4}t^{n-4}+\dots)|_{2^{-1}\ZZ+c_{S_{p}}},\qquad\forall n\le p,$$
and the coefficients $A_{2i}$, $i=1,\dots,[n/2]$, can be explicitly computed. For example,
\bea
A_2=\frac{{n\choose 2}}{4^2}(p+1);\quad A_4=\frac{{n\choose 4}}{4^4}(3p+1)(p+1);\quad
A_6=\frac{{n\choose 6}}{4^6}(15p^2+1)(p+1).% A_8=\frac{{n\choose 8}}{4^8}(105p^3-105p^2+63p+1)(p+1),
\eea

We denote by $I:=[0,p+1]$ the invariant neighborhood of $S_p$, meaning that all $(Sx)_i$ with indices in $[0,1]+I=[0,p+2]$ are computable solely from values $x_i$ with indices $i\in I$. Even though $I$ depends on $p$ we prefer not to indicate it explicitly, since fixing $S_p$ determines $I$ uniquely. We also use the notation $I_i=i+I$ for the invariant neighborhood of index $i$.

For the choice of the normal directions we assume that, with some fixed constant $C_1$, the sequence of approximate normals $\hat{\mathbf{n}}$
associated with $\hat{\mathbf{v}}=S\mathbf{v}$ satisfies
\be\label{Normal}
 \hat{\mathbf{n}}_{K}={\mathbf{n}}(\xi_{K}):\qquad  |\xi_{K}-s_{k}|\le C_1\|\Delta s\|_{I_i},\quad k\in I_i, \quad K\in I_{2i}\cup I_{2i+1}.
\ee
with some $\xi_K$ for all $K\in I_{2i}\cup I_{2i+1}$ and all $i\in \ZZ$. Note that this should hold for all the scales $j$ in the normal MT, even though it is not explicitly mentioned in the formula.

In this paper, like in \cite{HOS}, we are only interested in the decomposition step of the $S_p$ normal MT, and mainly in its smoothness analysis. By smoothness of the normal MT we understand the one of the induced regular parameterization $\mathbf{v}(t)=\lim_{j\to\infty}\mathbf{v}^j(t)$ of $\mathcal{C}$ (called {\em normal re-parametrization}), provided the limit exists and is continuous, where $\mathbf{v}^j(t)$ is the linear interpolant for the data vector $\mathbf{v}^j$ at the grid $2^{-j}\ZZ$ (i.e., $\mathbf{v}^j(2^{-j}i)=\mathbf{v}^j_i,\;\forall i$). Since no other parametrization of $\mathcal{C}$ can be smoother than $\mathbf{v}(s)$, it is equivalent to analyze the regularity properties of $s(t)=\lim_{j\to\infty}s^j(t)$, where $s^j(t)$ is the corresponding arc-length linear interpolant. Therefore, we consider
 \be\label{eq:Normal MT for s}
 s^j=S_ps^{j-1}+\omega^j,\qquad \omega^j:= s^j-S_ps^{j-1}.
 \ee
Investigating (\ref{eq:Normal MT for s}) rather than (\ref{eq:Normal MT}) is beneficial because not only it gives rise to a standard/classical MT for $s$ but also it allows us to locally decouple geometry and parameter information. To show that, we first write the Taylor formula for $\mathbf{v}(s)\in C^{k,\alpha}$ in the form
\be\label{Taylork}
\mathbf{v}(s)=\sum_{n=0}^k \frac{\mathbf{v}^{[n]}(\xi)}{n!}(s-\xi)^n +\mathbf{r}(s,\xi),\qquad
|\mathbf{r}(s,\xi)|\le C_{\mathcal{C}}|s-\xi|^{k+\alpha}.
\ee
If $k\ge2$ we define the local frame at $\xi$ as the one, centered at $\mathbf{v}(\xi)$ and given by unit tangent $\mathcal{T}=\mathbf{t}(\xi)$ and normal vectors $\mathcal{N}=\mathbf{n}(\xi)$
to the curve. We call $x(s),y(s)$ the local frame coordinates
\be\label{LF}
\mathbf{v}(s)=\mathbf{v}(\xi)+x(s)\mathcal{T}+y(s)\mathcal{N}
\ee
of $\mathcal{C}$. To keep the notation as simple as possible, we chose not to indicate explicitly the dependence of $x(s),y(s)$ on $\xi$. However, this has to be kept in mind. Then, the Frenet-Serret formulae read $\mathcal{T}'=\mathrm{k}(\xi)\mathcal{N},\;\mathcal{N}'=-\mathrm{k}(\xi) \mathcal{T}$, where the scalar function $\mathrm{k}(s)$ corresponds to the signed length of the curvature vector $\mathbf{v}''(s)$ of $\mathcal{C}$ at $s$. Combining them with (\ref{Taylork}) and letting $k=3$, we derive
\be\label{Taylor expansion for s}
\begin{split}
x(s)&=(s-\xi)-\frac{\mathrm{k}^2(\xi)}{6}(s-\xi)^3+\mathbf{r}(s,\xi)\mathcal{T}=(s-\xi)-\frac{\mathrm{k}^2(\xi)}{6}(s-\xi)^3+\mathbf{r}_1(s,\xi),\\
y(s)&=\frac{\mathrm{k}(\xi)}{2}(s-\xi)^2+\frac{\mathrm{k}'(\xi)}{6}(s-\xi)^3+\mathbf{r}(s,\xi)\mathcal{N}=
\frac{\mathrm{k}(\xi)}{2}(s-\xi)^2+\frac{\mathrm{k}'(\xi)}{6}(s-\xi)^3+\mathbf{r}_2(s,\xi).
\end{split}
\ee
Since $\mathbf{r}^2_1(s,\xi)+\mathbf{r}^2_2(s,\xi)=\mathbf{r}^2(s,\xi)$, $|\mathbf{r}_{1,2}(s,\xi)|\le  C_{\mathcal{C}}|s-\xi|^{3+\alpha}$. The decoupling, mentioned above, takes place when we choose $\xi=\xi^j_K$ as in (\ref{Normal}). The approximate normal $\hat{\mathbf{n}}^j_K$ has no tangential component. The size of the detail $d^j_K$ is irrelevant for the computation of $x(s)$, which is a third order perturbation of a linear function, thus locally invertible around $\xi^j_K$. To summarize, we can estimate $s^j_K$ without explicitly using $d^j_K$, but only using the direction information $\hat{\mathbf{n}}^j_K$.

The convergence and smoothness analysis of the MT (\ref{eq:Normal MT for s}) is well-known and is based on the following perturbation result (see \cite[Theorem 3.3]{DRS} for the proof in the general setting).

\begin{lemma}\label{lemma:DRS}
If there are constants $C$ and $\gamma>0$ such that the offsets $\omega^j$ in (\ref{eq:Normal MT for s}) satisfy
\be\label{eq:DRS}
\|\omega^j\|\le C2^{-\gamma j}, \qquad j\ge1,
\ee
then $s^j(t)$ converges uniformly to $s(t)$, and $s(t)\in C^{k',\alpha'}$ whenever $k'+\alpha'<\min(p,\gamma)$.
\end{lemma}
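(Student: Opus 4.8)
The plan is to observe that, although the normal MT itself is nonlinear, the auxiliary recursion (\ref{eq:Normal MT for s}) for the arc-length samples is an honest \emph{affine} subdivision process driven by $S_p$: the perturbation enters additively through the offsets $\omega^j$, and the decay hypothesis (\ref{eq:DRS}) decouples it from the geometry. Thus the statement is a concrete instance of the classical perturbation theory behind \cite[Theorem 3.3]{DRS}, and I would re-derive it directly for $S=S_p$. The first step is to unfold the recursion into the accumulation formula
\[ s^j = S_p^{\,j}s^0 + \sum_{\ell=1}^{j} S_p^{\,j-\ell}\,\omega^\ell, \]
which splits the limit into a pure subdivision term and an accumulated-perturbation term to be treated separately.

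For the pure term $S_p^{\,j}s^0$ I would simply invoke the regularity of the scheme recorded in Section~\ref{sec2}: $S_p$ generates $C^{p-1,1}$ limits, so $\lim_j S_p^{\,j}s^0\in C^{k',\alpha'}$ for every $k'+\alpha'\le p$, which is the origin of the factor $p$ in $\min(p,\gamma)$. The real work is the perturbation term, whose engine is Lemma~\ref{lemma:Sp}(i). Because $S_q$ has a positive mask of unit row-sums we have $\|S_q\|=1$ for all $q\ge0$, and $\|\Delta\|\le2$ on $\linf$; combining these with the commutation $\Delta^m S_p^{\,n}=2^{-mn}S_{p-m}^{\,n}\Delta^m$ (a direct iterate of (\ref{eq:DerivedSp}), valid for $m\le p$) gives, for $n=j-\ell$,
\[ \|\Delta^m S_p^{\,j-\ell}\omega^\ell\|\le 2^{-m(j-\ell)}\,2^{m}\,\|\omega^\ell\|\le C\,2^{m}\,2^{-mj}\,2^{(m-\gamma)\ell}. \]
Summing over $\ell=1,\dots,j$ yields a geometric series in $2^{\,m-\gamma}$: for $m<\gamma$ it stays bounded and produces decay $2^{-mj}$, while for $m>\gamma$ it is dominated by $\ell=j$ and produces decay $2^{-\gamma j}$, so altogether $\|\Delta^m(\text{perturbation})^j\|\lesssim 2^{-j\min(m,\gamma)}$.

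To convert these difference bounds into Hölder smoothness I would use the standard characterization of the regularity of interpolant limits by dyadic difference decay: membership $s\in C^{k',\alpha'}$ with $\alpha'\in(0,1]$ follows once $\|\Delta^{k'+1}s^j\|\le C2^{-j(k'+\alpha')}$ holds uniformly in $j$, with uniform convergence of $s^j(t)$ coming from the $m=1$ estimate, which is summable in $j$ since $\min(1,\gamma)>0$. Taking $m=k'+1$ in the bound above, the target rate $k'+\alpha'$ is attained exactly when $k'+\alpha'\le p$ (B-spline term) and $k'+\alpha'<\gamma$ (perturbation term), i.e.\ precisely when $k'+\alpha'<\min(p,\gamma)$.

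The main obstacle is concentrated at the threshold, in two ways. First, the geometric series degenerates at $m=\gamma$ to a bound of size $\sim j\,2^{-\gamma j}$, a logarithmic loss; this is exactly why only the \emph{strict} inequality $k'+\alpha'<\min(p,\gamma)$ can be claimed, and I would keep all difference estimates uniform in $j$ so that this crossover is handled without secretly invoking an endpoint. Second, the genuinely nonroutine analytic input is the passage from dyadic difference decay to membership in the continuous class $C^{k',\alpha'}$, including the fractional exponent; rather than reproving it I would isolate it as the black box actually borrowed from \cite{DRS} and verify only that the decay rates supplied by the two terms match its hypotheses.
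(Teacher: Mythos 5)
The paper gives no proof of this lemma at all --- it states the result and defers entirely to \cite[Theorem 3.3]{DRS} --- so there is nothing internal to compare against; what you have written is a correct reconstruction of that standard perturbation argument, specialized to $S_p$. Your decomposition $s^j = S_p^{\,j}s^0 + \sum_{\ell} S_p^{\,j-\ell}\omega^\ell$, the commutation $\Delta^m S_p^{\,n}=2^{-mn}S_{p-m}^{\,n}\Delta^m$ from (\ref{eq:DerivedSp}) together with $\|S_q\|=1$, and the geometric-series bookkeeping (including the logarithmic loss at $m=\gamma$, which is exactly why the inequality $k'+\alpha'<\min(p,\gamma)$ must be strict) are all sound, and you correctly isolate the one genuinely nontrivial step --- passing from dyadic grid-difference decay of the samples $s^j$ to membership of the limit $s(t)$ in the H\"older class $C^{k',\alpha'}$ --- as the ingredient that must still be imported from \cite{DRS}.
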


%Moreover, denote by $h^j_i=\|\Delta s^j\|_{I_i}$ the local step-size of $s^j|_{I_i}$. If $\min(p,\gamma)\ge2$, there is a constant $C'$ such that for every %$j\ge1$, every $i\in\ZZ$, and every $k\in I_i$
%\be\label{GridSmth2}
%s^j_k=s^j_i+(k-i)h^j_i+r_{k,i}, \qquad |r_{k,i}|\le C'2^{-2j}.
%\ee
%For the second part we use the following trivial estimations, based on $\|S_p\|=1$ and (\ref{eq:DerivedSp})
%\bea
%\|\Delta s^j\|\ge\frac12\|\Delta s^{j-1}\|-2\|\omega^j\|\ge\dots\ge2^{-j}\|\Delta s^0\|-\sum_{l=0}^{j-1}2^{-l}2^{1-\gamma(j-l)}\ge2^{-j}\left(\|\Delta %s^0\|-\sum_{l=0}^{j-1}2^{-l}\right)
%\eea
The classical way to obtain estimations of the type (\ref{eq:DRS}) is via proximity. The following lemma, which we state without proof, is central for the analysis in \cite{HOS} and we will also need it for this paper.
\begin{lemma}(\cite[Lemma 3.2]{HOS})\label{lem41}
For given $S$ with $P_e\ge 2$, let $1\le M < P_e$, and assume that $F: [a,b]\to \RR$ is $C^{M,\rho}$ for some $0<\rho\le 1$. Then for any finite sequence $(s_l)_{l\in I}\subset [a,b]$ and any index $K$ such that $Ss_K\in [a,b]$ is well-defined we have
$$
|F(Ss_K) - SF(s)_K| \le C (\sum_{\nu\in E_M} \prod_{m=1}^M\|\Delta^m s\|_{I}^{\nu_m} + \|\Delta s\|_{I}^{M+\rho}),\qquad K\in I\cup I_{1},
$$
where the constant $C$ is independent of the sequence $s$ and $K$, and
$$
E_M:=\{\nu\in \ZZ^M_+:\quad \sum_{m=1}^M m\nu_m = M+1,\quad 2\le \sum_{m=1}^M \nu_m\le M\}.
$$
\end{lemma}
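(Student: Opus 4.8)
The estimate is a \emph{commutator bound} between the pointwise nonlinearity $u\mapsto F(u)$ and the linear subdivision operator $S$, and the plan is to Taylor-expand $F$, reduce the problem to polynomial commutators of the form ``subdivide-then-raise-to-a-power'' versus ``raise-to-a-power-then-subdivide'', and then exploit the exact polynomial reproduction of degree up to $M$ (available since $M<P_e$) to annihilate all low-order contributions. Write $(Sx)_K=\sum_l a_{K-2l}x_l$ for the mask $(a_i)$ of $S$, recall that $\sum_l a_{K-2l}=1$ (reproduction of constants), and fix a base index $l_0\in I$ closest to $K/2$, together with the base point $\sigma:=s_{l_0}\in[a,b]$. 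Because $I$ is a fixed finite neighborhood, $|s_l-\sigma|\le|l-l_0|\,\|\Delta s\|_I\le C\|\Delta s\|_I$ for all $l\in I$, and, using $\sum_l a_{K-2l}=1$, also $|Ss_K-\sigma|=|\sum_l a_{K-2l}(s_l-\sigma)|\le C\|\Delta s\|_I$; thus every value at which $F$ is evaluated lies within $C\|\Delta s\|_I$ of $\sigma$.

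First I would split $F=T+r$, where $T$ is the degree-$M$ Taylor polynomial of $F$ at $\sigma$ and, since $F^{(M)}$ is $\rho$-H{\"o}lder, the remainder obeys $|r(u)|\le C|u-\sigma|^{M+\rho}$. Evaluating at $u=Ss_K$ and at $u=s_l$, $l\in I$, the two remainder contributions satisfy $|r(Ss_K)|+|\sum_l a_{K-2l}r(s_l)|\le C\|\Delta s\|_I^{M+\rho}$, which produces the last term of the asserted bound. It therefore remains to control the polynomial part $T(Ss_K)-(S\,T(s))_K=\sum_{n=0}^M\frac{F^{(n)}(\sigma)}{n!}D_n$, where $D_n:=(Ss_K-\sigma)^n-(S((s-\sigma)^n))_K$. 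The cases $n=0,1$ vanish identically by constant reproduction and linearity of $S$, so only $2\le n\le M$ survive.

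To handle $D_n$, I would expand each factor $s_l-\sigma$ by the discrete Newton (finite-difference) formula about $l_0$, namely $s_l-\sigma=\sum_{m\ge1}\binom{l-l_0}{m}g_m$ with $g_m:=(\Delta^m s)_{l_0}$ and $|g_m|\le\|\Delta^m s\|_I$ (exact on the finite set $I$), and then grade every resulting expression by the \emph{weight} $\sum_m m\nu_m$ of its monomials $\prod_m g_m^{\nu_m}$. The crucial observation is that each binomial $\binom{l-l_0}{m}$ has $l$-degree exactly $m$, so a weight-$j$ monomial is carried by a polynomial in $l$ of degree exactly $j$; consequently, for every $j\le M$ the operator $S$ reproduces that polynomial \emph{exactly} (with the shift $c_S$), and the weight-$j$ part of $(S((s-\sigma)^n))_K$ coincides with the weight-$j$ part of $(Ss_K-\sigma)^n$ (both equal the weight-$j$ part of $\mathfrak{w}(\zeta)^n$, where $\zeta:=K/2+c_S-l_0$ and $\mathfrak{w}(\zeta)=\sum_m\binom{\zeta}{m}g_m$). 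Hence all monomials of weight $\le M$ cancel in $D_n$, so $D_n$ is supported on weight $\ge M+1$. Moreover, in both $(Ss_K-\sigma)^n$ and $(S((s-\sigma)^n))_K$ every monomial is a product of exactly $n$ of the $g_m$'s, i.e. $\sum_m\nu_m=n$; the lowest surviving weight being $M+1$, the leading terms of $D_n$ are precisely the $\prod_m g_m^{\nu_m}$ with $\sum_m m\nu_m=M+1$ and $\sum_m\nu_m=n$. Letting $n$ range over $\{2,\dots,M\}$ reproduces exactly the index set $E_M$, and each such monomial is bounded by $\prod_m\|\Delta^m s\|_I^{\nu_m}$ up to a combinatorial constant depending only on the mask and on $\zeta$ (which is $O(1)$).

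Finally, the higher-weight survivors (weight $\ge M+2$) would be absorbed into the same sum: using the elementary inequalities $\|\Delta^{m}s\|_I\le C\|\Delta^{m'}s\|_I$ for $m\ge m'$, any product of $n$ factors of weight exceeding $M+1$ can be majorized by an $E_M$-monomial with the same number $n\in\{2,\dots,M\}$ of factors (lowering difference orders down to a canonical weight-$(M+1)$ pattern, which exists for each such $n$), so the whole polynomial commutator is dominated by $\sum_{\nu\in E_M}\prod_m\|\Delta^m s\|_I^{\nu_m}$. Uniformity of $C$ in $s$ and $K$ follows because $l_0$, the mask, and $\zeta$ stay within fixed bounds. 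I expect the genuine work, and the main obstacle, to be the third step: making the weight-grading argument rigorous — in particular verifying that reproduction of degree $\le M$ forces the \emph{exact}, simultaneous cancellation of all weight-$\le M$ monomials in the two terms of $D_n$, and that the surviving lowest-order terms are described precisely by the combinatorial constraints defining $E_M$ (between two and $M$ factors, total weight $M+1$), all while keeping the constants independent of the sequence and the index.
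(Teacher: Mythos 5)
The paper does not actually prove this lemma: it is quoted verbatim from \cite[Lemma 3.2]{HOS} and explicitly ``stated without proof'', so there is no in-paper argument to compare against. Your reconstruction is, however, the standard proof of such proximity estimates and is essentially sound: the Taylor split $F=T+r$ correctly isolates the $\|\Delta s\|_I^{M+\rho}$ term, and the key mechanism you identify is the right one --- exact reproduction of all polynomials of degree $\le M<P_e$ at the \emph{common} shifted node $K/2+c_S$ makes reproduction multiplicative, i.e. $\bigl(S(\mathfrak{p}\mathfrak{q})\bigr)_K=(S\mathfrak{p})_K(S\mathfrak{q})_K$ for reproduced factors, which is exactly what forces the simultaneous cancellation of all weight-$\le M$ monomials in $D_n$ and leaves the $E_M$ pattern (between $2$ and $M$ factors, total weight $M+1$) as the leading survivors; the absorption of weight-$\ge M+2$ terms via $\|\Delta^m s\|_I\le 2\|\Delta^{m-1}s\|_I$ also works, since a weight-$(M+1)$ pattern with $n$ factors exists for every $2\le n\le M$. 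One small correction: anchor the Newton forward-difference expansion at $l_0=\min I$ rather than at the index closest to $K/2$. With an interior $l_0$ the truncated expansion $s_l=\sum_m\binom{l-l_0}{m}(\Delta^m s)_{l_0}$ is exact only for $l\ge l_0$ (for $l<l_0$ it returns the extrapolated interpolating polynomial, and some stencils of $\Delta^m$ at $l_0$ would leave $I$), whereas with $l_0=\min I$ it is exact on all of $I$ and all the bounds $|l-l_0|\le|I|$, $|\zeta|=O(1)$ survive unchanged. With that adjustment the argument goes through and the constants are uniform as you claim.
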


The next result is again well-known and easy to show.
%Its proof is based on the fact, that if $R:\linf\rightarrow\linf$ is a local, bounded, linear operator with $R\mathbf{1}=\mathbf{0}$, then there exists a
%bounded linear operator $R_1$ with the same support as $R$, such that for every $x\in\linf$
%$Rx=R_1\Delta x$.
\begin{lemma}\label{lemma:Oswald}
Let $S,T:\linf\to\linf$ be local, bounded, linear operators that exactly reproduce polynomials of order $P_e$ with $c_S=c_T$. Then there is a constant $C$, such that
\be\label{eq:Oswald}
\|Sx-Tx\|\le C\|\Delta^{P_e}x\|,\qquad\forall x\in\linf.
\ee
\end{lemma}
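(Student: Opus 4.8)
The plan is to reduce everything to a vanishing-moment argument for the difference operator $R:=S-T$. First I would record the consequence of the hypotheses: $R$ is again local, bounded, and linear, and since $S$ and $T$ reproduce $\Pi_{P_e-1}$ exactly with the \emph{same} shift, for every polynomial $\mathfrak{p}$ of degree at most $P_e-1$ one has, by (\ref{PeDef}) together with $c_S=c_T$,
$$
S(\mathfrak{p}|_{\ZZ})=\mathfrak{p}|_{2^{-1}\ZZ+c_S}=\mathfrak{p}|_{2^{-1}\ZZ+c_T}=T(\mathfrak{p}|_{\ZZ}),
$$
so that $R(\mathfrak{p}|_{\ZZ})=0$ for all $\mathfrak{p}\in\Pi_{P_e-1}$. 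It is worth stressing that the hypothesis $c_S=c_T$ is exactly what is needed here: already on the linear sample $t|_{\ZZ}$ the two operators would otherwise differ by the nonzero constant sequence $c_S-c_T$, which $\Delta^{P_e}$ cannot control, so the statement would genuinely fail without it.

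Next I would exploit locality and shift-invariance to pass to the mask level. Writing the even and odd subdivision rules of $R$ as convolutions, $(Rx)_{2i}=\sum_k r^e_k x_{i+k}$ and $(Rx)_{2i+1}=\sum_k r^o_k x_{i+k}$ with finitely supported masks $r^e,r^o$, the identity $R(\mathfrak{p}|_{\ZZ})=0$ for all $\mathfrak{p}\in\Pi_{P_e-1}$ is equivalent to the vanishing-moment conditions $\sum_k r^e_k k^m=\sum_k r^o_k k^m=0$ for $m=0,1,\dots,P_e-1$. In terms of the symbols $r^e(z)=\sum_k r^e_k z^k$ and $r^o(z)=\sum_k r^o_k z^k$, these conditions say precisely that each symbol has a zero of order at least $P_e$ at $z=1$, i.e. $r^e(z)=(1-z)^{P_e}g^e(z)$ and $r^o(z)=(1-z)^{P_e}g^o(z)$ for Laurent polynomials $g^e,g^o$ with finitely supported coefficient sequences.

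The factor $(1-z)^{P_e}$ is, up to a monomial factor, the symbol of $\Delta^{P_e}$, and that monomial is absorbed into the indexing of $g^e,g^o$; hence the factorization translates back into operator form as $(Rx)_{2i}=\sum_k g^e_k(\Delta^{P_e}x)_{i+k}$ and $(Rx)_{2i+1}=\sum_k g^o_k(\Delta^{P_e}x)_{i+k}$. Since $g^e,g^o$ are finitely supported, bounding each sum by its $\ell_1$-norm gives $|(Rx)_{2i}|\le\|g^e\|_{\ell_1}\|\Delta^{P_e}x\|$ and $|(Rx)_{2i+1}|\le\|g^o\|_{\ell_1}\|\Delta^{P_e}x\|$; taking the supremum over all indices yields (\ref{eq:Oswald}) with $C=\max(\|g^e\|_{\ell_1},\|g^o\|_{\ell_1})$.

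The one step that carries the real content is the equivalence between ``vanishing moments up to order $P_e-1$'' and ``divisibility of the symbol by $(1-z)^{P_e}$'', together with the observation that the quotient symbol again has finite support and therefore induces a bounded operator. This is a standard generating-function computation, but it is where one must be careful to treat the even and odd masks separately and to track the shift so that the residual operators stay bounded. An equivalent, symbol-free route would be to apply summation by parts (Abel summation) $P_e$ times to each output entry, using the moment conditions to annihilate the boundary terms and expose $\Delta^{P_e}x$ directly.
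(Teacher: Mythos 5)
The paper states this lemma without proof (``The next result is again well-known and easy to show''), and your argument is precisely the standard folklore proof it is alluding to: $R=S-T$ annihilates $\Pi_{P_e-1}$ samples because the two operators reproduce them with the \emph{same} shift, the resulting vanishing-moment conditions on the even and odd masks are equivalent to the factor $(1-z)^{P_e}$ in each sub-symbol, and the finitely supported quotient masks give the uniform constant $C$. The details check out (including your correct remark that the hypothesis $c_S=c_T$ is genuinely needed, since otherwise $S$ and $T$ already differ by the constant $c_S-c_T$ on the linear sample, which $\Delta^{P_e}$ cannot see), so nothing further is required.
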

%%%%%%%%%%%%%%%%%%%%%%%%%%%%%%%%%%%%%%%%%%%%%%%%%%%%%%%%%%%%%%%%%%%%%%%%%%%%%%%%%%%%%%%%%
%%%%%%%%%%%%%%%%%%%%%%%%%%%%%%%%%%%%%%%%%%%%%%%%%%%%%%%%%%%%%%%%%%%%%%%%%%%%%%%%%%%%%%%%%
%%%%%%%%%%%%%%%%%%%%%%%%%%%%%%%%%%%%%%%%%%%%%%%%%%%%%%%%%%%%%%%%%%%%%%%%%%%%%%%%%%%%%%%%%

%%%%%%%%%%%%%%%%%%%%%%%%%%%%%%%%%%%%%%%%%%%%%%%%%%%%%%%%%%%%%%%%%%%%%%%%%%%%%%%%%%%%%%%%%
%%%%%%%%%%%%%%%%%%%%%%%%%%%%%%%%%%%%%%%%%%%%%%%%%%%%%%%%%%%%%%%%%%%%%%%%%%%%%%%%%%%%%%%%%
%%%%%%%%%%%%%%%%%%%%%%%%%%%%%%%%%%%%%%%%%%%%%%%%%%%%%%%%%%%%%%%%%%%%%%%%%%%%%%%%%%%%%%%%%
\section{Smoothness analysis}\label{sec3}
\begin{theorem}\label{thm:Smoothness1}
Let $p\ge3$. Let $\mathcal{C}$ be $C^3$, $\mathbf{v}^0\in\mathcal{C}$. Let $\hat{\mathbf{n}}^j$, $j\ge1$, satisfy (\ref{Normal}) with $s$ replaced by $s^{j-1}$, and the additional $|s^j_K-\xi_K|\le C_1\|\Delta s^{j-1}\|_{I_i}$, with a constant $C_1$ independent of $i$ and $j\ge1$. If the $S_p$ normal MT for $\mathcal{C}$ with initial data $\mathbf{v}^0$ and normals $\mathbf{n}^j$ is well-posed and convergent, then the normal re-parametrization is $C^{2,1-}$.
\end{theorem}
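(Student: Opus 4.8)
By Lemma~\ref{lemma:DRS}, to prove that the normal re-parametrization $s(t)$ is $C^{2,1-}$ --- i.e.\ lies in $C^{2,\alpha}$ for every $\alpha<1$ --- it suffices to establish the single offset estimate $\|\omega^j\|\le C\,2^{-3j}$ for the $\omega^j$ of \eqref{eq:Normal MT for s}: with $\gamma=3$ and $p\ge3$ one has $k'+\alpha'<\min(p,3)=3$ for all admissible $k',\alpha'$. So the whole task reduces to upgrading the detail-decay exponent of the classical scalar MT \eqref{eq:Normal MT for s} from the value $\gamma=2$ furnished by the general $P_e=2$ theory (what Lemma~\ref{lem41} with $M=1$ yields) to $\gamma=3$. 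Throughout I will use the spacing bound $\|\Delta s^{j-1}\|_{I_i}\le C\,2^{-j}$, uniform in $i$, which belongs to the standard convergence analysis of \eqref{eq:Normal MT for s}; the contraction behind it follows from $\Delta S_p=\tfrac12 S_{p-1}\Delta$ in Lemma~\ref{lemma:Sp}(i), giving $\|\Delta s^{j}\|\le\tfrac12\|\Delta s^{j-1}\|+2\|\omega^j\|$.

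The core of the proof is the local decoupling anticipated before the statement. Fix an index $K$, let $\xi=\xi_K$ be the associated parameter from \eqref{Normal}, and pass to the Frenet frame $(\mathcal{T},\mathcal{N})$ centred at $\mathbf{v}(\xi)$. By \eqref{Normal} the direction used in the update is exactly $\hat{\mathbf{n}}^j_K=\mathbf{n}(\xi)=\mathcal{N}$, so $\mathbf{v}^j_K-(S_p\mathbf{v}^{j-1})_K$ is purely normal; projecting onto $\mathcal{T}$ and using that $S_p$ is linear and reproduces the constant $\mathbf{v}(\xi)$ yields the tangential identity $x(s^j_K)=(S_p\,x(s^{j-1}))_K$, where $x$ is the local abscissa of \eqref{LF}. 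The decisive geometric fact is that in arc-length parametrization $x(s)$ carries \emph{no} quadratic term: by \eqref{Taylor expansion for s}, $x(s)=(s-\xi)-\tfrac{\mathrm{k}^2(\xi)}{6}(s-\xi)^3+\mathbf{r}_1(s,\xi)$. Writing $\sigma:=s-\xi$ and substituting this into both sides of the tangential identity, the linear parts collapse precisely into $\omega^j_K=\sigma^j_K-(S_p\sigma^{j-1})_K$, leaving
\[
\omega^j_K=\frac{\mathrm{k}^2(\xi)}{6}\Big((\sigma^j_K)^3-\big(S_p(\sigma^{j-1})^3\big)_K\Big)+\big(S_p\,\mathbf{r}_1(s^{j-1},\xi)\big)_K-\mathbf{r}_1(s^j_K,\xi).
\]

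It then remains to estimate the three pieces on the right. From \eqref{Normal} applied with $s^{j-1}$ we have $|\xi-s^{j-1}_k|\le C_1\|\Delta s^{j-1}\|_{I_i}$, and by the extra hypothesis of the theorem $|s^j_K-\xi|\le C_1\|\Delta s^{j-1}\|_{I_i}$; together with the spacing bound, every $\sigma$ that occurs is $O(2^{-j})$. Since $\|S_p\|=1$, the two cubic contributions are $O(2^{-3j})$, and since $\mathbf{r}_1(\cdot,\xi)$ vanishes to third order with $|\mathbf{r}_1(s,\xi)|\le C_{\mathcal{C}}|s-\xi|^3$ (an extra $o(1)$ factor appearing when $\mathcal{C}$ is merely $C^3$), the two remainder contributions are $O(2^{-3j})$ as well. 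Taking the supremum over $K$ gives $\|\omega^j\|\le C\,2^{-3j}$ uniformly in $j$, and Lemma~\ref{lemma:DRS} finishes the proof.

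I expect the main obstacle to be, conceptually, the exact cancellation of the linear part in the tangential identity: it rests entirely on the vanishing quadratic coefficient of $x(s)$, i.e.\ on the arc-length/tangent geometry, and this is exactly the mechanism that lifts the rate from $\gamma=2$ to $\gamma=3$ and hence the smoothness from the general $C^{1,1}$ result to $C^{2,1-}$, bypassing the order of polynomial reproduction altogether. The remaining difficulties are technical: keeping the constant uniform in $K$ and $j$ although the frame, $\xi$, the curvature $\mathrm{k}(\xi)$ and the invariant neighborhood $I_i$ all move with $K$; bounding $S_p$ applied to the nonlinear sequences $(\sigma^{j-1})^3$ and $\mathbf{r}_1(s^{j-1},\xi)$ via locality and $\|S_p\|=1$; and avoiding circularity by drawing the spacing bound $\|\Delta s^{j-1}\|\le C\,2^{-j}$ from the standard convergence theory rather than from the sharp estimate it helps to prove.
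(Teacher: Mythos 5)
Your proposal is correct and follows essentially the same route as the paper's proof: pass to the local frame at $\xi_K$, use that the update is purely normal to get the tangential identity $x(\bar s_K)=(S_p x(s))_K$, exploit the vanishing quadratic term of $x(s)$ in arc-length together with exact linear reproduction of $S_p$ to obtain $|\omega^j_K|\le C\|\Delta s^{j-1}\|^3_{I_i}$, close the contraction $\|\Delta s^j\|\asymp 2^{-j}$, and invoke Lemma~\ref{lemma:DRS}. The only cosmetic difference is that you keep the cubic coefficient $-\mathrm{k}^2(\xi)/6$ explicit where the paper absorbs it into the third-order remainder $\mathbf{r}_1$; the mechanism and the resulting estimate are identical.
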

\begin{proof}
Firstly, we consider only one refinement step $\bar{\mathbf{v}}=S_p\mathbf{v}+d\hat{\mathbf{n}}$, respectively $\bar{s}=S_ps+\omega$, and we skip the index $j$ for simplicity. Following the notation from (\ref{Normal}), take any $K\in I_{2i}\cup I_{2i+1}$ and consider the local frame at $\xi_K$. Applying (\ref{Taylor expansion for s}) we get for all $s\in[0,L]$
$$x(s)=(s-\xi_K)+\mathbf{r}_1(s,\xi_K),\qquad |\mathbf{r}_1(s,\xi_K)|\le C_{\mathcal{C}}|s-\xi_K|^{3}.$$
Since $\hat{\mathbf{n}}_K$ has no tangential component $x(\bar{s}_K)=(S_px)_K$. Subtracting $x((S_ps)_K)$ from both sides, using that $S_p$ exactly reproduces linear functions and has positive mask, thus $\|S_p\|=1$, we deduce
\bea
&&x(\bs_K)-x((S_ps)_K)=(S_px)_K-x((S_ps)_K).\\
x(\bs_K)-x((S_ps)_K)&=&\omega_K+\mathbf{r}_1(\bs_K,\xi_K)-\mathbf{r}_1((S_ps)_K,\xi_K);\\
(S_px)_K-x((S_ps)_K)&=&\left(S_p\Big((s-\xi_K)+\mathbf{r}_1(s,\xi_K)\Big)\right)_K-\Big((S_ps)_K-\xi_K)+\mathbf{r}_1((S_ps)_K,\xi_K)\Big)\\
&=&(S_ps)_K-\xi_K+(S_p\mathbf{r}_1(s,\xi_K))_K-(S_ps)_K+\xi_K-\mathbf{r}_1((S_ps)_K,\xi_K);\\
\Longrightarrow\quad |\omega_K|&=&|(S_p\mathbf{r}_1(s,\xi_K))_K-\mathbf{r}_1(\bs_K,\xi_K)|\le C_{\mathcal{C}}\|s-\xi_K\|^3_{I_i}+C_{\mathcal{C}}|\bs_K-\xi_K|^3\\
&\le& 2C_{\mathcal{C}}C^3_1\|\Delta s\|^3_{I_i}\le C\|\Delta s\|^3_{I_i}.
\eea
The above computations were independent of the level $j$, so there is a constant $C=2C_{\mathcal{C}}C^3_1$ such that for all $j\ge1$
\be\label{eq:omega3}
\|\omega^j\|\le C\|\Delta s^{j-1}\|^3.
\ee
Now, in order to apply Lemma~\ref{lemma:DRS} and complete the proof we just need to show $\|\Delta s^j\|\asymp 2^{-j}$. Such estimations appear quite often in the literature \cite{DRS,Runborg,HOS,Harizanov}, so we only sketch the argument here. Since the $S_p$ normal MT is convergent, $s^j(t)$ uniformly converges towards a continuous $s(t)$ as $j\to\infty$. Therefore, $\|\Delta s^j\|\to 0$ and there exists a $j_0$ with $C\|\Delta s^j\|^2<1/2$ for all $j\ge j_0$. Without loss of generality, let $j_0=0$. From (\ref{eq:Normal MT for s}), (\ref{eq:DerivedSp}), (\ref{eq:omega3}), and $\|S_p\|=1$
\bea
\|\Delta s^{j}\|\le\frac12\|\Delta s^{j-1}\|+C\|\Delta s^{j-1}\|^3&\le&\underbrace{(1/2+C\|\Delta s^{j-1}\|^2)}_{\rho}\|\Delta s^{j-1}\|\quad\Rightarrow\quad
\|\Delta s^{j}\|\le \rho^j\|\Delta s^0\|\\
\Longrightarrow\quad\|\Delta s^{j}\|&\le&\prod_{l=0}^{j-1}(1/2+C\rho^{2l})\le2^{-j}\prod_{l=0}^{\infty}(1+2C\rho^{2l})\le C2^{-j}.
\eea
The constant $C$ changes from line to line, but remains uniformly bounded with respect to $j$! The notation ``$C^{2,1-}$'' is consistent with the result in Lemma~\ref{lemma:DRS} and means that $s(t)\in C^{2,\alpha}$ for all $\alpha<1$.
\end{proof}

\begin{remark}\label{remark1}
Under the assumptions in Theorem~\ref{thm:Smoothness1} for the $S_p$ normal MT, $p\ge3$, there is a constant $C_2$ such that for every $j\ge0$, every $i\in\ZZ$, and every $k\in I_i$
\be\label{GridSmth2}
s^j_k=s^j_i+(k-i)\|\Delta s^j\|_{I_i}+\mathbf{r}_{k,i}, \qquad |\mathbf{r}_{k,i}|\le C_22^{-2j}.
\ee
\end{remark}
Indeed, using the same technique as in the proof above we derive $\|\Delta^2 s^{j}\|\le C''2^{-2j}$ for all $j\ge0$. From the reverse triangle inequality $\|\Delta s^j\|\ge(1/2-C\|\Delta s^{j-1}\|^2)\|\Delta s^{j-1}\|$, we conclude that there exists a $j_0\ge0$, such that $\|\Delta s^j\|\ge C'2^{-j}$ for all $j\ge j_0$. Finally, the $S_p$ normal MT is well-posed, thus $\Delta s^j >\mathbf{0}$, $j=0,\dots,j_0-1$, meaning that $\|\Delta s^j\|\ge C'2^{-j}$ with a possibly smaller, but still bounded away from zero, constant $C'$. Therefore, $\|\Delta^2 s^j\|\le C''/(C')^2\|\Delta s^j\|^2$ and (\ref{GridSmth2}) follows from \cite[Lemma 2.5]{HOS}.

Theorem~\ref{thm:Smoothness1} improves the smoothness result from \cite[Theorem 2.6]{HOS} for the family $\{S_p\}_{p\ge3}$ of prediction operators. There is experimental evidence (see \cite{HOS}) that, when generating $\hat{\mathbf{n}}^j$ via random (admissible) $\xi^j$, $s(t)\in C^{2,1-}$ is the best one can hope for. However, a more systematic and ``clever'' choice of approximate normals may lead to higher regularity. The remainder of Section~\ref{sec3} is devoted to this idea.

First of all, $S_p\mathbf{v}^j_K$ is a convex combination of elements of $\mathbf{v}^{j-1}$, thus the predicted point and the initial curve are separated by the graph of $\mathbf{v}^{j-1}(t)$, whenever the curvature $\mathrm{k}(s)$ of $\mathcal{C}$ has a constant sign on the local arc $\mathbf{v}(s^{j-1}|_{I_i})$. Since $\mathbf{v}^{j-1}(t)$ is piece-wise linear, it is a second order approximation of $\mathcal{C}$, meaning that $d^j_K\asymp 2^{-2j}$. For $C^3$ curves $\mathcal{C}$, $\mathrm{k}$ is continuous. Hence, for every non-flat point $\mathbf{v}(s)$ with $\mathrm{k}(s)\neq0$ there exists a neighborhood $[s^-,s^+]$ of $s$, where the curvature does not vanish and keeps its sign. In other words, asymptotically all the nontrivial details $d^j_K$ are of order 2 regardless of the choice of $\hat{\mathbf{n}}^j_K$. Big details lead to big displacements of the newly inserted points, thus the choice of $\xi^j$ in (\ref{Normal}) has indeed an impact on the regularity of the generated data.

Formula (\ref{Normal}) emphasizes the role of $\mathcal{C}$ and is to some extend misleading because the main property of $\hat{\mathbf{n}}^j$ in a general $S$ normal MT is that they depend solely on the coarse-scale data (which also encodes curve information, but in a discrete way). We define a special class of generalized normals with the help of another linear operator $N:\linf\to\linf$ via
\be\label{Normal2}
\hat{\mathbf{n}}_K^{j}=\frac{(N\Delta \mathbf{v}^{j-1})_K^{\perp}}{|(N\Delta \mathbf{v}^{j-1})_K|},\qquad K\in I_{2i}\cup I_{2i+1},
\ee
where $N$ reproduces constants, has positive mask and $\supp(N)\subseteq \supp(S^{[1]})$. Obviously (\ref{Normal2}) implies (\ref{Normal}). Indeed, $(N\Delta \mathbf{v}^{j-1})_K$ is a convex combination of $\{\Delta\mathbf{v}^{j-1}_k:k\in I_i\}$. Hence, there exists a $\xi^j_k$ on the interval $s^{j-1}|_{I_i}$ such that $\mathbf{v}'(\xi^j_k)\parallel (N\Delta \mathbf{v}^{j-1})_K$, making $\hat{\mathbf{n}}_{K}^j=\mathbf{n}(\xi^j_K)$ and $\|\xi^j_{K}-s^{j-1}\|_{I_i}\le |I_i|\|\Delta s^{j-1}\|_{I_i}$. Furthermore, the normal line $\ell^j_K(t):=S\mathbf{v}^{j-1}_K+t\hat{\mathbf{n}}^j_K$ splits the set $\mathbf{v}^{j-1}_{I_i}$, leaving elements in both the hyperplanes. Introducing a simple criteria that in case of many intersection points between $\ell^j_K$ and $\mathcal{C}$ we always take a $\mathbf{v}^j_K$ that corresponds to arc-length parameter $s^j_K$ on the interval $s^{j-1}|_{I_i}$ (such a point exists, due to continuity of $\mathcal{C}$), we guarantee
\be\label{eq:local imputation}
|\xi^j_{K}-s^j_K|\le |I_i|\|\Delta s^{j-1}\|_{I_i}.
\ee
Due to (\ref{Normal2}) we can talk about {\em $(S,N)$ normal MT}, meaning that the prediction points are computed by $S$, while the normal directions are generated by $N$. Due to (\ref{eq:local imputation}), for any admissible choice of $N$ the assumption $|s^j_K-\xi_K|\le C_1\|\Delta s^{j-1}\|_{I_i}$ in Theorem~\ref{thm:Smoothness1} is automatically fulfilled with $C_1=p+1$.

Finding the optimal choice of $N$ for a given $S_p$ remains an open question. In this paper we suggest $N=S_{p-2}$. The $(S_p,S_{p-2})$ normal MT has several advantages. First of all, (\ref{eq:DerivedSp}) implies $S_{p-2}\Delta \mathbf{v}^{j-1}=1/2\Delta S_{p-1}\mathbf{v}^{j-1}$, while (\ref{eq:Sp}) implies that during the process of evaluating $S_{p}\mathbf{v}^{j-1}$ we have to evaluate $S_{p-1}\mathbf{v}^{j-1}$ anyway, thus generating $\hat{\mathbf{n}}^j$ is basically for free - it neither causes extra computational efforts nor uses extra machine memory. Secondly, $\hat{\mathbf{n}}^j$ have nice geometrical interpretation, namely those are the true discrete normals at the midpoints $S_p\mathbf{v}^{j-1}$ of the polygonal line $S_{p-1}\mathbf{v}^{j-1}(t)$. The latter helps for establishing the following nice fact.

\begin{proposition}\label{prop11}
Let $\mathcal{C}$ be a closed, convex, non-self-intersecting $C^3$ curve, and $p\ge2$. Then, for any choice $\mathbf{v}^0$ of initial data that contains at least $\lceil \frac{p}{2}\rceil+1$ different points, the $(S_p,S_{p-2})$ normal MT is globally well-defined.
%Moreover, if the oriented angles $\angle(\Delta\mathbf{v}^0_i,\Delta\mathbf{v}^0_{i+p+1})$ satisfy
%\be\label{eq:well-posedness assumption}
%\angle(\Delta\mathbf{v}^0_i,\Delta\mathbf{v}^0_{i+p+1})\le\frac{\pi}{2},\qquad \forall i\in\ZZ,
%\ee
%then so is the $(S_p,S_q)$ normal MT, for all $p-2>q\ge0$.
\end{proposition}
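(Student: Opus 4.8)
The plan is to prove, by induction on the refinement level $j$, that every step of the decomposition is executable and produces a legitimate configuration, so that the transform never breaks down. The induction hypothesis I would carry is that $\mathbf{v}^{j-1}$ consists of distinct points lying on $\mathcal{C}$, listed in strict cyclic (arc-length) order, and numbering at least $\lceil p/2\rceil+1$. The base case $j=1$ is exactly the assumption on $\mathbf{v}^0$, and since one $(S_p,S_{p-2})$ step doubles the number of points, the cardinality bound is trivially inherited; the substance is to show that (a) every approximate normal $\hat{\mathbf{n}}^j_K$ in (\ref{Normal2}) is well defined, (b) the corresponding normal line meets $\mathcal{C}$, and (c) the selection rule returns a point that keeps the configuration distinct and cyclically ordered.

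For (b) I would exploit convexity throughout. Writing $\Omega$ for the closed convex region bounded by $\mathcal{C}$, every predicted point $(S_p\mathbf{v}^{j-1})_K$ is, by (\ref{eq:Sp}) and the positivity of the mask, a convex combination of points of $\mathbf{v}^{j-1}\subset\partial\Omega$, hence lies in $\Omega$. Moreover, by (\ref{eq:DerivedSp}) this predicted point is the midpoint of an edge of the convex control polygon $S_{p-1}\mathbf{v}^{j-1}$, and $\hat{\mathbf{n}}^j_K$ is orthogonal to that edge. Since the edge is a genuine chord direction of a convex polygon, the normal direction is transversal to $\mathcal{C}$, so the normal line leaves $\Omega$ through $\partial\Omega=\mathcal{C}$ on both sides and therefore meets $\mathcal{C}$. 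The ordering convention recorded in (\ref{eq:local imputation}) then selects the intersection whose arc-length parameter lies in $s^{j-1}|_{I_i}$; by closedness and continuity such a point exists, and this choice preserves the strict cyclic order, re-establishing the induction hypothesis at level $j$.

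The heart of the matter is (a): I must show $(S_{p-2}\Delta\mathbf{v}^{j-1})_K\neq\mathbf{0}$ for every $K$, equivalently that consecutive vertices of the convex polygon $S_{p-1}\mathbf{v}^{j-1}$ never coincide. Because $\supp(S_{p-2})=[0,p-1]$, each output $(S_{p-2}\Delta\mathbf{v}^{j-1})_K$ is a strictly positive, binomially weighted combination of a window of exactly $\lceil p/2\rceil$ consecutive edge vectors $\Delta\mathbf{v}^{j-1}_k$. The role of the hypothesis that $\mathbf{v}^0$ has at least $\lceil p/2\rceil+1$ distinct points is precisely to guarantee that every such window is a proper sub-arc of the closed convex polygon $\mathbf{v}^{j-1}$, i.e.\ it omits at least one edge. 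On a convex curve the edge vectors turn monotonically, and for a proper arc the partial sums telescope to honest chords $\mathbf{v}^{j-1}_m-\mathbf{v}^{j-1}_l\neq\mathbf{0}$; I would then pass from this equal-weight telescoping statement to the binomially weighted one by summation by parts, using the symmetry and unimodality of the mask from (\ref{eq:Sp}) together with the monotone turning to conclude that the weighted combination remains inside the pointed convex cone spanned by those chords, and in particular does not vanish.

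I expect step (a) to be the main obstacle, and the bound $\lceil p/2\rceil+1$ to be sharp. The naive device of enclosing the window in an open half-plane fails, because a proper arc that omits only one short edge can still span a directional turn strictly between $\pi$ and $2\pi$; hence the non-vanishing cannot follow from the edge directions alone and must genuinely use both the closedness of the polygon (the full edge sum is zero, so no proper consecutive sub-path telescopes to zero) and the particular positive weights. The sharpness shows up in the low-degree degenerate cases, where too few points let a window wrap far enough that a balanced positive combination of nearly antiparallel chords cancels, so that $\lceil p/2\rceil+1$ is exactly the threshold at which such cancellation is excluded. Once (a)--(c) are in place the induction closes, and the $(S_p,S_{p-2})$ normal MT is globally well defined.
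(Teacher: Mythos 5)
Your induction on the level $j$ does not close at step (c), and that step is where the entire content of the proposition lives. From the selection rule (\ref{eq:local imputation}) you only know that the arc-length parameter $s^j_K$ of the chosen intersection point lies somewhere in the interval $s^{j-1}|_{I_i}$; since the invariant neighborhoods attached to consecutive indices $K$ overlap in all but one point, these intervals overlap heavily and nothing in what you wrote excludes $s^j_K>s^j_{K+1}$. Asserting that ``this choice preserves the strict cyclic order'' is precisely the claim that has to be proved. The paper attacks exactly this difficulty, and with a different induction: on the degree $p$, not on the level. Assuming the $(S_{p-1},S_{p-3})$ normal MT is well-posed with nonnegative details, it observes that $S_p\mathbf{v}^{j-1}_K$ is the midpoint of the edge $(S_{p-1}\mathbf{v}^{j-1})_K(S_{p-1}\mathbf{v}^{j-1})_{K+1}$ and that the normal ray $\lambda(\Delta S_{p-1}\mathbf{v}^{j-1}_K)^{\perp}$ lies inside the cone bounded by the rays $\lambda(\Delta S_{p-2}\mathbf{v}^{j-1}_K)^{\perp}$ and $\lambda(\Delta S_{p-2}\mathbf{v}^{j-1}_{K+1})^{\perp}$; by convexity that cone cuts out of $\mathcal{C}$ exactly the arc between the consecutive parameters $\hat{s}^1_K<\hat{s}^1_{K+1}$ produced by the $(S_{p-1},S_{p-3})$ transform, so $\hat{s}^1_K<s^1_K<\hat{s}^1_{K+1}$ and the new sequence interlaces an already monotone one. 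Monotonicity (and positivity of the details, needed to keep the induction going) then cascades down to the base case $p=2$, i.e.\ Chaikin, settled in \cite{HOS}. Some such interlacing or nesting device is indispensable; local membership in $s^{j-1}|_{I_i}$ alone can never yield global ordering.

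Concerning what you call the heart of the matter, the non-vanishing of $(S_{p-2}\Delta\mathbf{v}^{j-1})_K$: your observation that the half-plane argument fails for windows turning by more than $\pi$ is fair, but your proposed repair (``summation by parts \dots to conclude'') is only a sketch and does not settle the cancellation question for the binomial weights. In fact no global window argument is needed, because the Lane--Riesenfeld recursion (\ref{eq:Sp}) gives $\Delta S_{q}\mathbf{v}_K=\tfrac12\bigl(\Delta S_{q-1}\mathbf{v}_K+\Delta S_{q-1}\mathbf{v}_{K+1}\bigr)$: each control polygon $S_q\mathbf{v}$ is obtained from $S_{q-1}\mathbf{v}$ by midpoint averaging, which preserves convexity and produces a zero edge only if two consecutive edges of the previous polygon are antiparallel, which strict convexity forbids. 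So the non-degeneracy is absorbed by the same induction on $q\le p-1$ (starting from the distinct points of $\mathbf{v}^{j-1}$) and never requires analyzing a window of $\lceil p/2\rceil$ edges at once. In short: your existence step (b) is fine and matches the paper's general $(S_p,N)$ discussion, the non-vanishing step (a) is incomplete but repairable, and the ordering step (c) --- the actual substance of well-posedness --- is missing.
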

\begin{proof}
Fix $p\in\NN$, $\mathbf{v}^0\in\mathcal{C}$, respectively $\{s^0 : \mathbf{v}^0=\mathbf{v}(s^0)\}$. The support of the mask of $S_p$ has cardinality $\lceil \frac{p}{2}\rceil+1$, so $\mathbf{v}^0$ should consist of at least that many elements to generate $S_p\mathbf{v}^0$. Fix $i\in\ZZ$ and let $K\in I_{2i}\cup I_{2i+1}$. The $S_p,S_{p-2}$ normal MT belongs to the family of $(S_p,N)$ normal MTs, discussed earlier, for which we already showed that there always exists an intersection point between the normal line through $S_p\mathbf{v}^0_K$ and $\mathcal{C}$. What remains to be checked is that the points in the derived finer sample $\mathbf{v}^1$ are properly ordered (i.e., $s^1$ is monotonically increasing). Due to (\ref{eq:Sp}), $S_p\mathbf{v}^0_K$ is the midpoint of the line segment $(S_{p-1}\mathbf{v}^0)_K(S_{p-1}\mathbf{v}^0)_{K+1}$, while the normal $n^1_K$ is orthogonal to it. Moreover, $\mathcal{C}$, thus also the polyline $S_{p-1}\mathbf{v}^0(t)$ are convex meaning that both the oriented angles $\angle(\Delta S_{p-1}\mathbf{v}^0_{K-1},\Delta S_{p-1}\mathbf{v}^0_{K})$ and $\angle(\Delta S_{p-1}\mathbf{v}^0_{K},\Delta S_{p-1}\mathbf{v}^0_{K+1})$ are positive. Assume that all the detail sequences at all levels for $(S_{p-1},S_{p-3})$ normal MT are nonnegative. Hence, the cone with base $(S_{p-1}\mathbf{v}^0)_K(S_{p-1}\mathbf{v}^0)_{K+1}$, and boundary rays $\{\lambda(\Delta S_{p-2}\mathbf{v}^0_{K})^\perp\;,\;\lambda(\Delta S_{p-2}\mathbf{v}^0_{K+1})^\perp\}$, $\lambda>0$, cuts out the arc $\{\mathbf{v}(s) : s\in[\hat{s}^1_K,\hat{s}^1_{K+1}]\}$ from $\mathcal{C}$, where $\hat{s}^1$ is the arc-length sequence generated from $s^0$ via the $(S_{p-1},S_{p-3})$ normal MT. The ray $\lambda n^1_K$ lies completely within this cone, meaning that there exists a unique intersection point between the ray and the above arc. Now, choosing this point to be $\mathbf{v}^1_K$, we derive $\hat{s}^1_K<s^1_K<\hat{s}^1_{K+1}$, $d^1_K>0$, and the result follows by induction on $p$. The case $p=2$ corresponds to the Chaikin normal MT, considered in \cite{HOS}, and its well-posedness was proven in Theorem 4.1 there. The positivity of the details is trivial.
\end{proof}

For general, not necessarily convex curves, Proposition~\ref{prop11} can be applied locally for disjoint convex/concave pieces and essentially says that if $\mathbf{v}^0$ captures the geometry of $\mathcal{C}$, then the $(S_p,S_{p-2})$ normal MT is well-posed - a very simple and easy-to-check criteria, unlike the one proposed in \cite{HOS}.

\begin{theorem}\label{thm:Smoothness2}
Let $p\ge4$. Let $\mathcal{C}$ be $C^4$, $\mathbf{v}^0\in\mathcal{C}$. If the $(S_p,S_{p-2})$ normal MT for $\mathcal{C}$ with initial data $\mathbf{v}^0$ is well-posed and convergent, then the normal re-parametrization is $C^{3,1-}$.
\end{theorem}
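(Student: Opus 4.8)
The plan is to run the same scheme as in the proof of Theorem~\ref{thm:Smoothness1} but to sharpen the offset estimate (\ref{eq:omega3}) from third to fourth order, $\|\omega^j\|\le C\|\Delta s^{j-1}\|^4$, and then invoke Lemma~\ref{lemma:DRS} with $\gamma=4$. Since $p\ge4$, this gives $s(t)\in C^{k',\alpha'}$ for all $k'+\alpha'<\min(p,4)=4$, i.e. $C^{3,1-}$. As the $(S_p,S_{p-2})$ normal MT satisfies the hypotheses of Theorem~\ref{thm:Smoothness1} with $C_1=p+1$ by (\ref{eq:local imputation}), I may freely use the facts established there and in Remark~\ref{remark1}: $\|\Delta s^j\|\asymp 2^{-j}$, $\|\Delta^2 s^j\|\le C2^{-2j}$, the order-three bound $\omega_K=O(\|\Delta s\|^3)$, and the identity $\omega_K=(S_p\mathbf{r}_1)_K-\mathbf{r}_1(\bar{s}_K,\xi_K)$ coming from the tangential equation $x(\bar{s}_K)=(S_px)_K$ in the local frame at $\xi_K$. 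I write $h:=\|\Delta s^{j-1}\|_{I_i}$ and suppress the scale index.

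Working one order deeper in (\ref{Taylor expansion for s}), which is legitimate since $\mathcal{C}\in C^4$, I would use $\mathbf{r}_1(s,\xi_K)=-\tfrac{\mathrm{k}^2(\xi_K)}{6}(s-\xi_K)^3+\tilde{\mathbf{r}}(s,\xi_K)$ with $|\tilde{\mathbf{r}}|\le C|s-\xi_K|^4$. Because $\|S_p\|=1$ and $|s_l-\xi_K|\le Ch$ on $I_i$, the $\tilde{\mathbf{r}}$-contribution to $\omega_K$ is $O(h^4)$, so only the cubic term survives:
\[
\omega_K=-\frac{\mathrm{k}^2(\xi_K)}{6}\Big[(S_p(s-\xi_K)^3)_K-(\bar{s}_K-\xi_K)^3\Big]+O(h^4).
\]
To evaluate the bracket I approximate $s_l$ on $I_i$ by a linear sequence $s_l^{\mathrm{lin}}=a+bl$ with $b\asymp h$ and residual $\eta_l=O(h^2)$ (possible since $\|\Delta^2 s\|\le Ch^2$). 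Expanding $(s-\xi_K)^3$ about its linear part $w_l:=s_l^{\mathrm{lin}}-\xi_K$ and discarding the cross terms, which are $O(h^4)$ or smaller, reduces matters to a genuine cubic sample, where (\ref{eq:P=2 and 3})--(\ref{eq:P=2 and 3 generalized}) give exactly $(S_pw^3)_K-((S_pw)_K)^3=\tfrac{3(p+1)}{16}b^2(S_pw)_K$. Combining this with $\bar{s}_K=(S_ps)_K+\omega_K$, $\omega_K=O(h^3)$, and $(S_pw)_K=((S_ps)_K-\xi_K)+O(h^2)$, I expect
\[
(S_p(s-\xi_K)^3)_K-(\bar{s}_K-\xi_K)^3=\frac{3(p+1)}{16}\,\|\Delta s\|^2\big((S_ps)_K-\xi_K\big)+O(h^4),
\]
so $\omega_K=O(h^4)$ will follow the moment I can prove the decoupling estimate $(S_ps)_K-\xi_K=O(h^2)$.

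Establishing $(S_ps)_K-\xi_K=O(h^2)$ is the crux, and it is exactly here that the choice $N=S_{p-2}$ in (\ref{Normal2}) enters. By construction $\mathbf{t}(\xi_K)\parallel(S_{p-2}\Delta\mathbf{v}^{j-1})_K$; writing each edge as $\Delta\mathbf{v}^{j-1}_l=\int_{s_l}^{s_{l+1}}\mathbf{t}(\sigma)\,d\sigma$ and using $\mathbf{t}(\sigma)\cdot\mathcal{N}=\mathrm{k}(\xi_K)(\sigma-\xi_K)+O((\sigma-\xi_K)^2)$, the vanishing of the $\mathcal{N}$-component of $(S_{p-2}\Delta\mathbf{v}^{j-1})_K$ becomes $\sum_l b_{K-2l}\,\Delta s_l\,(m_l-\xi_K)=O(h^3)$, where $b$ is the mask of $S_{p-2}$ and $m_l:=\tfrac12(s_l+s_{l+1})$ is the edge-midpoint. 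Hence $\xi_K$ equals the weighted midpoint-average $\big(\sum_l b_{K-2l}\Delta s_l\,m_l\big)\big/\big(\sum_l b_{K-2l}\Delta s_l\big)$ up to $O(h^2)$. Inserting the linear model and using that $S_{p-2}$ reproduces constants and linears with shift $c_{S_{p-2}}$ gives $\xi_K=a+b\big(K/2+c_{S_{p-2}}+\tfrac12\big)+O(h^2)$, while linear reproduction of $S_p$ gives $(S_ps)_K=a+b\big(K/2+c_{S_p}\big)+O(h^2)$. Their leading terms cancel precisely because (\ref{eq:c(Sp)}) yields $c_{S_{p-2}}+\tfrac12=\tfrac{p-3}{4}+\tfrac12=\tfrac{p-1}{4}=c_{S_p}$, so $(S_ps)_K-\xi_K=O(h^2)$, as required.

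This shift-matching is the hard part and the whole reason for using $S_{p-2}$: for a generic admissible $\xi_K$ one only has $(S_ps)_K-\xi_K=O(h)$, whence the leading term above is $O(h^3)$ and one recovers Theorem~\ref{thm:Smoothness1} but no better. Granting the decoupling, $\omega_K=O(h^4)$ holds with a constant uniform in $i$ and the scale, so $\|\omega^j\|\le C\|\Delta s^{j-1}\|^4\le C2^{-4j}$. Feeding $\gamma=4$ into Lemma~\ref{lemma:DRS} and using $p\ge4$ then yields $s(t)\in C^{3,\alpha}$ for every $\alpha<1$, i.e. the normal re-parametrization is $C^{3,1-}$, completing the proof.
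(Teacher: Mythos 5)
Your proposal reaches the right conclusion and turns on exactly the same arithmetic miracle as the paper, namely the shift matching $c_{S_{p-2}}+\tfrac12=\tfrac{p-3}{4}+\tfrac12=\tfrac{p-1}{4}=c_{S_p}$ from (\ref{eq:c(Sp)}), but the route is genuinely different. The paper sets up the local frame at $\mathbf{v}(S_ps_K)$ (not at $\xi_K$), writes the defining orthogonality relation $(\bar{x}_K-S_px_K)S_{p-2}\Delta x_K+(\bar{y}_K-S_py_K)S_{p-2}\Delta y_K=0$ as in (\ref{eq:vK}), and extracts $\omega_K=\mathrm{O}(2^{-4j})$ directly from the asymptotics of the four factors; the shift matching appears there as the vanishing of the $h_i^2$-coefficient in $S_{p-2}\Delta y_K$. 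You instead stay in the frame at $\xi_K$, push the expansion of the tangential equation $x(\bar s_K)=(S_px)_K$ one order beyond the proof of Theorem~\ref{thm:Smoothness1}, and reduce everything to the decoupling estimate $(S_ps)_K-\xi_K=\mathrm{O}(h^2)$; the shift matching appears there as the cancellation of the leading terms of $\xi_K$ and $(S_ps)_K$. Both are legitimate, and your version has the merit of making transparent \emph{why} $S_{p-2}$ is the right normal generator (it is the statement later isolated as Proposition~\ref{prop2}), whereas the paper's version is computationally cleaner because it never has to locate $\xi_K$ precisely.

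One step needs repair. Your derivation of $(S_ps)_K-\xi_K=\mathrm{O}(h^2)$ passes through $\mathrm{k}(\xi_K)\sum_l b_{K-2l}\Delta s_l(m_l-\xi_K)=\mathrm{O}(h^3)$ and then divides by $\mathrm{k}(\xi_K)$; at or near an inflection point this division is illegitimate, and indeed without a curvature lower bound (Theorem~\ref{thm:Smoothness2} assumes only $C^4$, not strict convexity as in Proposition~\ref{prop2}) the claim $(S_ps)_K-\xi_K=\mathrm{O}(h^2)$ is false in general --- one only gets $\mathrm{O}(h)$ there. The fix is cheap: what the geometry actually yields is $\mathrm{k}(\xi_K)\bigl((S_ps)_K-\xi_K\bigr)=\mathrm{O}(h^2)$, and since the term this estimate feeds into carries the prefactor $\mathrm{k}^2(\xi_K)$, you still obtain
\begin{equation*}
\omega_K=-\frac{\mathrm{k}^2(\xi_K)}{6}\cdot\frac{3(p+1)}{16}\,h^2\bigl((S_ps)_K-\xi_K\bigr)+\mathrm{O}(h^4)
=\mathrm{O}\bigl(h^2\,\mathrm{k}(\xi_K)\cdot\mathrm{k}(\xi_K)\bigl((S_ps)_K-\xi_K\bigr)\bigr)+\mathrm{O}(h^4)=\mathrm{O}(h^4).
\end{equation*}
State the intermediate estimate in this weighted form and the argument is complete; the paper's frame choice sidesteps the issue automatically, which is worth noting as a small advantage of its formulation.
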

\begin{proof}
It suffices to work with a single refinement step. To simplify notation, let $\mathbf{v}^{j}=\mathbf{v}$, while $\mathbf{v}^{j+1}=\bar{\mathbf{v}}.$ Fix $K\in\ZZ$, let $i\in\ZZ$ be such that $I_K\subset I_{2i}\cup I_{2i+1}$, and denote $h_i:=\|\Delta s\|_{I_i}$. From Theorem~\ref{thm:Smoothness1} and Remark~\ref{remark1} we know that $C'2^{-j}\le h_i\le C2^{-j}$. Let $(\mathcal{T},\mathcal{N})$ be the local frame at $\mathbf{v}(S_ps_K)$ (see Fig.~\ref{fig:LocalFrame}). We estimate $|S_p\big(x(s)\big)_K-x(S_ps_K)|=|S_p\big(x(s)\big)_K|$. Applying $S_p$ to both the sides of (\ref{GridSmth2}), and using that $S_p$ exactly reproduces constants and linear polynomials, we derive

\begin{figure}[htp]
\begin{center}
\scalebox{1} % Change this value to rescale the drawing.
{
\begin{pspicture}(0,-2.30)(12.26,1.95)
\psbezier[linewidth=0.04](0.14,-0.09)(0.14,-0.89)(3.4214764,-1.7956831)(4.42,-1.85)(5.418524,-1.9043168)(12.117822,-0.65746766)(11.96,0.33)
\psdots[dotsize=0.12](4.42,-1.85)
\psdots[dotsize=0.12](2.2,-1.35)
\psline[linewidth=0.04cm,arrowsize=0.05291667cm 2.0,arrowlength=1.4,arrowinset=0.4]{->}(0.0,-1.85)(12.24,-1.85)
\psline[linewidth=0.04cm,arrowsize=0.05291667cm 2.0,arrowlength=1.4,arrowinset=0.4]{->}(4.42,-1.85)(4.42,1.93)
\psdots[dotsize=0.12](3.2,1.5)
\psline[linewidth=0.04cm,linestyle=dashed,dash=0.16cm 0.16cm](3.2,1.5)(3.2,-1.85)
\psline[linewidth=0.04cm,linestyle=dashed,dash=0.16cm 0.16cm](3.2,1.5)(4.42,1.5)
\psline[linewidth=0.04cm](3.2,1.5)(2.2,-1.35)
\uput[-90](5.12,-1.85){$\mathbf{v}(S_ps_{K})$}
\uput[-110](2.2,-1.3){$\bar{\mathbf{v}}_K$}
\uput[45](3.2,-1.7){$\tilde{\mathbf{v}}_K$}
\uput[-30](12.24,-1.85){$\mathcal{T}=\mathbf{t}(S_ps_{K})$}
\uput[60](4.42,1.93){$\mathcal{N}=\mathbf{n}(S_ps_{K})$}
\uput[-90](3,-1.85){$S_p\big(x(s)\big)_{K}$}
\uput[0](4.42,1.3){$S_p\big(y(s)\big)_{K}$}
\uput[135](3.2,1.5){$S_p\mathbf{v}_{K}$}
\end{pspicture}
}
\caption{The local frame $(\mathcal{T},\mathcal{N})$ at $\mathbf{v}(S_ps_K)$}\label{fig:LocalFrame}
\end{center}
\end{figure}
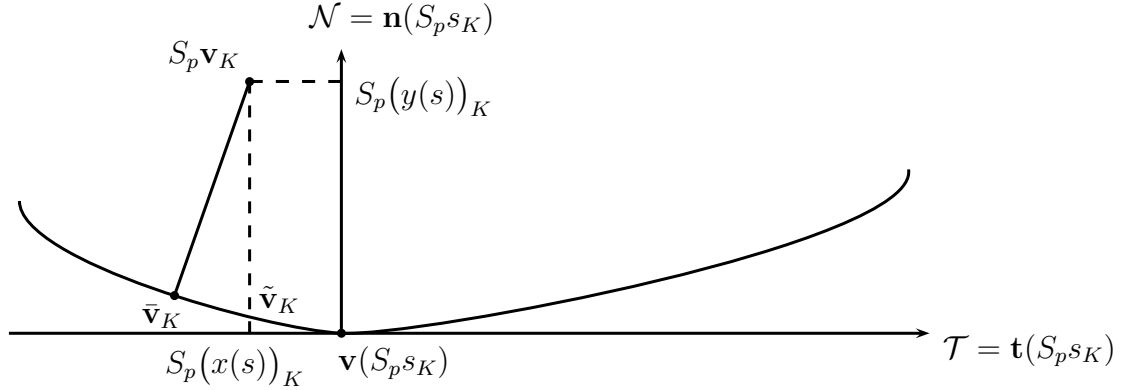
\be\label{GridSmth2a}
S_ps_K=s_i+h_iS_p\left((t-i)|_{\ZZ}\right)_K+(S_p\mathbf{r}_{\cdot,i})_K=s_i+\left(\frac{K}{2}+c_{S_p}-i\right)h_i+\bar{\mathbf{r}}_{K,i},
\ee
where $\|\mathbf{r}_{\cdot,i}\|_{I_i},|\bar{\mathbf{r}}_{K,i}|\le C_22^{-2j}$. Substituting (\ref{GridSmth2a}) into (\ref{GridSmth2}) implies
\be\label{GridSmth2b}
s_k=S_ps_K+\left(k-\frac{K}{2}-c_{S_p}\right)h_i+\mathbf{r}_{k,i}-\bar{\mathbf{r}}_{K,i},\qquad\forall k\in I_i.
\ee
Again, $\|\mathbf{r}_{\cdot,i}-\bar{\mathbf{r}}_{K,i}\|_{I_i}=\mathrm{O}(2^{-2j})$. Combining the latter with (\ref{Taylor expansion for s}), denoting by $\kappa=\mathrm{k}(S_ps_K)$, and applying $S_p$ to both the sides of the formula for $x(s)$ there, we derive
\be\label{estimationSpX}
\begin{split}
S_p(x(s))_K&=(S_p(s-S_ps_K))_K-\frac{\kappa^2}{6}(S_p(s-S_ps_K)^3)_K+S_p(\mathbf{r}_1(s,S_ps_K))_K\\
&=-\frac{\kappa^2}{6}h^3_i(S_p(t-K/2-c_{S_p})^3|_{\ZZ})_K+\mathrm{O}(2^{-4j})=\mathrm{O}(2^{-4j}).
\end{split}
\ee
For the last computation we used (\ref{GridSmth2b}) and $h_i\asymp2^{-j}$ for both $|S_p(\mathbf{r}_1(s,S_ps_K))_K|\le C_{\mathcal{C}}\|s-S_ps_K\|^4_{I_i}=\mathrm{O}(2^{-4j})$ and $(s_k-S_ps_K)^3=(k-K/2-c_{S_p})^3h_i^3+\mathrm{O}(2^{-4j})$. We also used (\ref{eq:P=2 and 3 generalized}). Analogous computations for the $y$-coordinate give rise to
\be\label{estimationSpY}
\begin{split}
S_p(y(s))_K&=\frac{\kappa^2}{6}(S_p(s-S_ps_K)^2)_K+\mathrm{O}(2^{-3j})=\frac{\kappa^2}{6}h_i^2(S_p(t-K/2-c_{S_p})^2|_{\ZZ})_K+\mathrm{O}(2^{-3j})\\
&=\frac{\kappa^2}{6}h_i^2\left((K/2-K/2-c_{S_p}+c_{S_p})^2+\frac{p+1}{16}\right)+\mathrm{O}(2^{-3j})=\frac{\kappa^2(p+1)}{96}h_i^2+\mathrm{O}(2^{-3j}).
\end{split}
\ee

For the $(S_p,S_{p-2})$ normal MT $\bar{\mathbf{v}}_K$ is the intersection point of the bisector of $S_{p-1}\mathbf{v}_KS_{p-1}\mathbf{v}_{K+1}$ and $\mathcal{C}$. The latter can be expressed by the following equation
\be\label{eq:vK}
(\bar{x}_K-S_px_K)S_{p-2}\Delta x_K+(\bar{y}_K-S_py_K)S_{p-2}\Delta y_K=0.
\ee
We will show that it implies
\be\label{sPerturbation}
\omega_K=\mathrm{O}(2^{-4j}).
\ee
From Theorem~\ref{thm:Smoothness1}, we already know that $\omega_K=\mathrm{O}(2^{-3j})$. Applying (\ref{GridSmth2b}) together with (\ref{Taylor expansion for s}), using the exact polynomial reproduction property of $S_p$, (\ref{eq:P=2 and 3 generalized}), and (\ref{eq:c(Sp)}) we deduce
\bea
\bar{x}_K&=&x(\bar{s}_K)=\bar{s}_K-S_ps_K+\mathrm{O}(|\bar{s}_K-S_ps_K|^3)=\omega_K+\mathrm{O}(2^{-9j});\\
\bar{y}_K&=&y(\bar{s}_K)=\mathrm{O}(|\bar{s}_K-S_ps_K|^2)=\mathrm{O}(2^{-6j});\\
\Delta x_k&=&\Delta s_k+\mathrm{O}(2^{-3j})=h_i+\mathrm{O}(2^{-2j})\quad\Rightarrow\quad S_{p-2}\Delta x_K=h_i+\mathrm{O}(2^{-2j});\\
\Delta y_k&=&\frac{\kappa}{2}\Delta s_k(s_{k+1}+s_k-2S_ps_K)+\mathrm{O}(2^{-3j})=\frac{\kappa}{2}h_i^2(2k+1-K-2c_{S_p})+\mathrm{O}(2^{-3j})\\
&\Rightarrow& S_{p-2}\Delta y_K=\frac{\kappa}{2}h_i^2\big(2(K/2+c_{S_{p-2}})+1-K-2c_{S_p}\big)+\mathrm{O}(2^{-3j})=\mathrm{O}(2^{-3j}).
\eea
Note that the choice of the $(S_p,S_{p-2})$ normal MT was crucial for the last estimation and for any other $q\neq p-2$ we have that the coefficient in front of $h_i^2$ in the corresponding expansion of $S_{q}\Delta y_K$ is not zero!

Plugging all the above estimations plus (\ref{estimationSpX}) and (\ref{estimationSpY}) in (\ref{eq:vK}) concludes
$$(\omega_K+\mathrm{O}(2^{-4j}))(h_i+\mathrm{O}(2^{-2j}))+\left(-\frac{\kappa^2(p+1)}{96}h_i^2+\mathrm{O}(2^{-3j})\right)\mathrm{O}(2^{-3j})=0\;
\Rightarrow\;\omega_K=\mathrm{O}(2^{-4j}).$$
What remains is to apply Lemma~\ref{lemma:DRS}.
\end{proof}

Therefore, the use of the operator $S_{p-2}$ for generating the normals in the $S_p$ normal MT further improves the guaranteed smoothness of the normal re-parametrization. From the proof above and the remarks after Lemma~\ref{lemma:Sp} it is clear that $\omega^j_K=c_4\|\Delta s^{j-1}\|^4_{I_i}+\dots$ with $c_4\neq 0$, thus Lemma~\ref{lemma:DRS} is no longer applicable with $k'\ge4$. On the other hand, $S_p$ is approximating and (\ref{eq:DRS}) is a sufficient but not a necessary condition for $C^{k',\alpha'}$ regularity of the data. In \cite{HarizanovPHD} the latter has been replaced by $\|\Delta^n\omega^j\|\le C2^{-(n+\gamma)j}$, which in view of (\ref{eq:DerivedSp}) is nothing but applying Lemma~\ref{lemma:DRS} to the $n$-th derived scheme of $S_p$ and then integrating back. However, such kind of ``precise'' analysis is much more technical and hard to perform, so whether it improves the result above or not is still unknown.

We conclude the section with another result that further illustrates the benefits of choosing $N=S_{p-2}$ in (\ref{Normal2}). Based on it, we build the ``combined prediction'' framework in the next section for improving the detail decay rates of the $S_p$ normal MTs.
\begin{proposition}\label{prop2}
Let $p\ge3$. Let $\mathcal{C}$ be $C^{4}$ strictly convex (i.e., $\mathrm{k}(s)>0$, $\forall s\in[0,L]$), and $\mathbf{v}^0\in\mathcal{C}$ contains at least $\lceil \frac{p}{2}\rceil+1$ different points. Let $\xi^j$ be the arc-length sequence as in (\ref{Normal}), associated to the approximate normals (\ref{Normal2}) with $N=S_{p-2}$. If the $(S_p,S_{p-2})$ normal MT of $\mathcal{C}$, with initial data $\mathbf{v}^0$ is convergent, there is a positive constant $C$ such that
\be\label{eq:NormalProximity}
|\xi^j_K-s^j_K|\le C\|\Delta s^{j-1}\|^2,\qquad\forall j\ge1,\;\forall K\in\ZZ.
\ee
\end{proposition}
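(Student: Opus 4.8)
The plan is to estimate both $\xi^j_K$ and $s^j_K$ against the prediction parameter $S_ps^{j-1}_K$, establishing $|\xi^j_K-S_ps^{j-1}_K|=\mathrm{O}(2^{-2j})$ and $|s^j_K-S_ps^{j-1}_K|=\mathrm{O}(2^{-3j})$, and then to conclude by the triangle inequality together with $\|\Delta s^{j-1}\|\asymp 2^{-j}$ from Theorem~\ref{thm:Smoothness1}. As in the proof of Theorem~\ref{thm:Smoothness2} I fix $j\ge1$, $K\in\ZZ$ and $i$ with $I_K\subset I_{2i}\cup I_{2i+1}$, set $h_i:=\|\Delta s^{j-1}\|_{I_i}\asymp 2^{-j}$, and work in the local frame $(\mathcal{T},\mathcal{N})$ at $\mathbf{v}(S_ps^{j-1}_K)$ with coordinates $x(s),y(s)$ from (\ref{Taylor expansion for s}) and $\kappa:=\mathrm{k}(S_ps^{j-1}_K)$. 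Strict convexity together with continuity of $\mathrm{k}$ on the compact interval $[0,L]$ guarantees $\kappa\ge\kappa_{\min}>0$, which will be essential for the inversion below. Since $S_p$ has positive mask reproducing constants and $\xi^j_K$ lies on the arc $s^{j-1}|_{I_i}$, both $S_ps^{j-1}_K$ and $\xi^j_K$ belong to an interval of length at most $(p+1)h_i$, which gives the a priori first-order bound $\xi^j_K-S_ps^{j-1}_K=\mathrm{O}(2^{-j})$ that I intend to bootstrap.

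By (\ref{Normal2}) with $N=S_{p-2}$, the parameter $\xi^j_K$ is characterized by $\mathbf{v}'(\xi^j_K)\parallel(S_{p-2}\Delta\mathbf{v}^{j-1})_K$. The crucial input is the local-frame description of this direction: its $\mathcal{T}$- and $\mathcal{N}$-components are exactly $(S_{p-2}\Delta x)_K$ and $(S_{p-2}\Delta y)_K$, which were already computed in the proof of Theorem~\ref{thm:Smoothness2} and whose derivation is valid already for $p\ge3$, namely $(S_{p-2}\Delta x)_K=h_i+\mathrm{O}(2^{-2j})$ and $(S_{p-2}\Delta y)_K=\mathrm{O}(2^{-3j})$. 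The second estimate is precisely the $S_{p-2}$-specific cancellation (the $h_i^2$-coefficient $2c_{S_{p-2}}-2c_{S_p}+1$ vanishes only for the index $p-2$), and it is the whole point of the construction: for a generic admissible $N$ one would only have $(N\Delta y)_K=\mathrm{O}(2^{-2j})$, which degrades the final bound to first order.

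Next I invert the parallelism condition. Differentiating (\ref{Taylor expansion for s}) and writing $\delta:=\xi^j_K-S_ps^{j-1}_K$ gives $x'(\xi^j_K)=1+\mathrm{O}(\delta^2)$ and $y'(\xi^j_K)=\kappa\delta+\mathrm{O}(\delta^2)$. Expressing $\mathbf{v}'(\xi^j_K)\parallel(S_{p-2}\Delta\mathbf{v}^{j-1})_K$ as the vanishing cross product
\be\label{eq:crossprod}
x'(\xi^j_K)\,(S_{p-2}\Delta y)_K-y'(\xi^j_K)\,(S_{p-2}\Delta x)_K=0,
\ee
and substituting the estimates above while using the a priori bound $\delta=\mathrm{O}(2^{-j})$ to absorb the quadratic remainders (the mixed terms $\kappa\delta\cdot\mathrm{O}(2^{-2j})$ and $\mathrm{O}(\delta^2)h_i$ are both $\mathrm{O}(2^{-3j})$), equation (\ref{eq:crossprod}) collapses to $\kappa\delta h_i=\mathrm{O}(2^{-3j})$. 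Dividing by $\kappa h_i\asymp 2^{-j}$ (here strict convexity is used) yields the desired second-order bound $\xi^j_K-S_ps^{j-1}_K=\mathrm{O}(2^{-2j})$.

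Finally, since the $(S_p,S_{p-2})$ normal MT is an admissible $(S_p,N)$ transform, Theorem~\ref{thm:Smoothness1} applies and (\ref{eq:omega3}) gives $s^j_K-S_ps^{j-1}_K=\omega^j_K=\mathrm{O}(\|\Delta s^{j-1}\|^3)=\mathrm{O}(2^{-3j})$, a fortiori $\mathrm{O}(2^{-2j})$ (for $p\ge4$ Theorem~\ref{thm:Smoothness2} even gives $\mathrm{O}(2^{-4j})$). The triangle inequality then produces $|\xi^j_K-s^j_K|\le|\xi^j_K-S_ps^{j-1}_K|+|S_ps^{j-1}_K-s^j_K|=\mathrm{O}(2^{-2j})=\mathrm{O}(\|\Delta s^{j-1}\|^2)$, uniformly in $K$ and $j$, as claimed. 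I expect the delicate step to be the bootstrapping in (\ref{eq:crossprod}): one must verify that every term discarded in passing to $\kappa\delta h_i=\mathrm{O}(2^{-3j})$ is genuinely of order $2^{-3j}$, which forces the use of the a priori first-order bound on $\delta$, and that solving the parallelism relation for $\xi^j_K$ is legitimate, which rests entirely on $\kappa$ being bounded away from zero.
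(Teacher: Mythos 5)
Your argument is correct, but it follows a genuinely different route from the paper's. The paper never touches the parallelism condition analytically: it introduces the auxiliary point $\tilde{\mathbf{v}}_K$ cut out by the \emph{exact} normal $\mathbf{n}(S_ps_K)$ through $S_p\mathbf{v}_K$, derives $|d_K|=\tfrac{\kappa^2(p+1)}{96}h_i^2+\mathrm{O}(2^{-3j})$ and $\big||d_K|-|\tilde d_K|\big|=\mathrm{O}(2^{-4j})$ from (\ref{estimationSpX})--(\ref{sPerturbation}), applies the law of cosines to the triangle $S_p\mathbf{v}_K,\mathbf{v}_K,\tilde{\mathbf{v}}_K$ to get $|\mathbf{n}(\xi_K)-\mathbf{n}(S_ps_K)|=\mathrm{O}(2^{-2j})$ (here strict convexity enters through the lower bound $|d_K|\gtrsim h_i^2$), and finally transfers this to the parameters via the Lipschitz invertibility of $\mathbf{v}'$. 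You instead linearize the defining relation $\mathbf{v}'(\xi_K)\parallel(S_{p-2}\Delta\mathbf{v})_K$ as a vanishing determinant in the local frame and solve for $\delta=\xi_K-S_ps_K$, with strict convexity entering only through the division by $\kappa$. Your route is shorter, needs only $\omega_K=\mathrm{O}(2^{-3j})$ from Theorem~\ref{thm:Smoothness1} rather than the full (\ref{sPerturbation}), and isolates cleanly where the $N=S_{p-2}$ cancellation $2c_{S_{p-2}}+1-2c_{S_p}=0$ is used; the paper's route is longer but produces as byproducts the leading-order detail formula and the normal-vector proximity $|\mathbf{n}(\xi_K)-\mathbf{n}(S_ps_K)|=\mathrm{O}(2^{-2j})$, both of which are reused in the discussion following the proposition and in Theorem~\ref{thm:DetailDecay}. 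Two small points to patch in a write-up: (a) do not obtain $x'(\xi_K)=1+\mathrm{O}(\delta^2)$, $y'(\xi_K)=\kappa\delta+\mathrm{O}(\delta^2)$ by formally differentiating the remainder in (\ref{Taylor expansion for s}) --- Taylor-expand $\mathbf{v}'(s)=\mathbf{t}(s)$ (equivalently $\cos\phi(s),\sin\phi(s)$ with $\phi'=\mathrm{k}$) directly, which is legitimate since $\mathcal{C}\in C^4$; (b) state explicitly that well-posedness, needed for Remark~\ref{remark1} and hence for the estimates of $(S_{p-2}\Delta x)_K$ and $(S_{p-2}\Delta y)_K$, is supplied by Proposition~\ref{prop11} under the convexity hypothesis.
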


\begin{proof}
Proposition~\ref{prop11} implies that the $(S_p,S_{p-2})$ normal MT is well-posed, so Theorem~\ref{thm:Smoothness2} is applicable. Analyzing the proof of the latter, we see that only cubical polynomial reproduction was used, thus (\ref{sPerturbation}) remains valid also for $p=3$. We follow the same notation as there. Let $\tilde{\mathbf{v}}_K$ be generated by $\tilde{\mathbf{n}}_k=\mathbf{n}(S_ps_K)$, as illustrated on Fig.~\ref{fig:LocalFrame}. From (\ref{estimationSpX}), $|\bar{\mathbf{v}}_K-\mathbf{v}(S_ps_K)|\le|\omega_K|$, (\ref{sPerturbation}), $\tilde{s}_K=S_p(x(s))_K$, and using triangle inequality we obtain
\bea
\big||d_K\mathbf{n}(\xi_K)|-|\tilde{d}_K\mathbf{n}(S_ps_K)|\big|&=&\big||S_p\mathbf{v}_K-\bar{\mathbf{v}}_K|-|S_p\mathbf{v}_K-\tilde{\mathbf{v}}_K|\big|\\
&\le&|S_p(x(s))_K|+|\bar{\mathbf{v}}_K-\mathbf{v}(S_ps_K)|+|y(\tilde{s}_K)|\le\mathrm{O}(2^{-4j}).
\eea
To summarize, the last estimation together with (\ref{estimationSpY}) states $$|d_K|=\frac{\kappa^2(p+1)}{96}h_i^2+\mathrm{O}(2^{-3j}),\qquad \big||d_K|-|\tilde{d}_K|\big|=\mathrm{O}(2^{-4j}).$$
For a strictly convex, closed curve $\mathcal{C}$ we have that the curvature $\mathrm{k}(s)$ is uniformly bounded away from zero, meaning that $\mathrm{k}(s)\ge c_{\mathrm{k}}>0$, $s\in[0,L]$, with constant $c_{\mathrm{k}}$ dependent only on $\mathcal{C}$. Thus, $c_2=\kappa^2(p+1)/96\ge c^2_{\mathrm{k}}(p+1)/96>0$. Using the notation from Fig.~\ref{fig:SecondOrderPerturbation}, and applying two cosine theorems for the small and the big triangle, respectively, we conclude

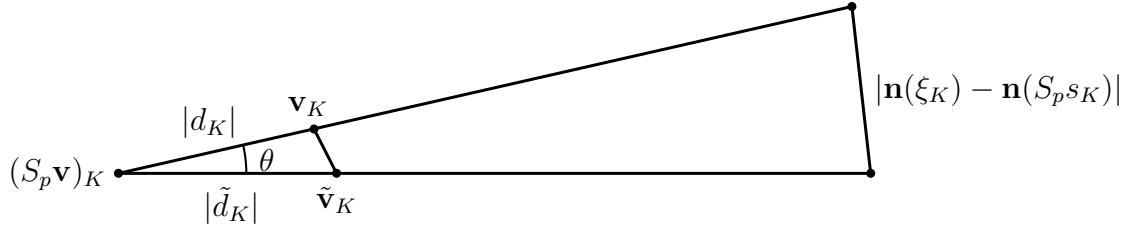
\begin{figure}[htp]
\begin{center}
\scalebox{1} % Change this value to rescale the drawing.
{
\begin{pspicture}(0,-1.5)(10.3,1.19)
\psdots[dotsize=0.12](0,-1.11)
\psdots[dotsize=0.12](2.9,-1.11)
\psdots[dotsize=0.12](10,-1.11)
\psline[linewidth=0.04cm](0,-1.11)(10,-1.11)
\psline[linewidth=0.04cm](0,-1.11)(9.7505,1.11)
\psdots[dotsize=0.12](2.6,-0.52)
\psdots[dotsize=0.12](9.7505,1.11)
\psline[linewidth=0.04cm](9.7505,1.11)(10,-1.11)
\psline[linewidth=0.04cm](2.6,-0.52)(2.9,-1.11)
\rput(0,-1.11){
\psarc(0,0){1.7}{0}{13}
\uput[30](1.7,0){$\theta$}
}
\uput[180](0,-1.11){$(S_p\mathbf{v})_{K}$}
\uput[-90](2.9,-1.11){$\tilde{\mathbf{v}}_K$}
\uput[100](2.6,-0.52){$\mathbf{v}_K$}
\uput[-90](1.5,-1.11){$|\tilde{d}_K|$}
\uput[100](1.3,-0.82){$|d_K|$}
\uput[0](9.875,0){$|\mathbf{n}(\xi_K)-\mathbf{n}(S_ps_K)|$}
%\rput[bc]{96.7}(10.3,0){$|\mathbf{n}(\xi_K)-\mathbf{n}(S_ps_K)|$}
\end{pspicture}
}
\caption{The estimation for $|\mathbf{n}(\xi_K)-\mathbf{n}(S_ps_K)|$}\label{fig:SecondOrderPerturbation}
\end{center}
\end{figure}
\bea
\cos(\theta)=\frac{2|d_K||\tilde{d}_K|+(|d_K|-|\tilde{d}_K|)^2-|\mathbf{v}_K-\tilde{\mathbf{v}}_K|^2}{2|d_K||\tilde{d}_K|}&=&1+\frac{\mathrm{O}(2^{-8j})}{2c_2^2h_i^4+\mathrm{O}(2^{-5j})}=1+\mathrm{O}(2^{-4j});\\
|\mathbf{n}(\xi_K)-\mathbf{n}(S_ps_K)|^2=2\big(1-\cos(\theta)\big)=\mathrm{O}(2^{-4j})\quad&\Rightarrow&\quad |\mathbf{n}(\xi_K)-\mathbf{n}(S_ps_K)|\le C_{\mathrm{k}}\|\Delta s\|^2_{I_i}.
\eea
Finally, the inequality $\mathrm{k}(s)\ge c_{\mathrm{k}}$, $s\in[0,L]$ and the non-self-intersection assumption on $\mathcal{C}$ also imply that the differentiable function $\mathbf{v}'(s)$ is invertible, and its inverse is Lipschtiz continuous, i.e.,
$$|\mathbf{v}'(s)-\mathbf{v}'(s')|\ge c_{\mathbf{v}'}|s-s'|,\qquad s,s'\in[0,L),$$
where the constant $c_{\mathbf{v}'}>0$ depends solely on $\mathcal{C}$. The argument follows \cite[(6)]{Runborg} and is left to the reader. Therefore
\bea
|\xi_K-S_ps_K|\le\frac{1}{c_{\mathbf{v}'}}|\mathbf{v}'(\xi_K)-\mathbf{v}'(S_ps_K)|=\frac{1}{c_{\mathbf{v}'}}|\mathbf{n}(\xi_K)-\mathbf{n}(S_ps_K)|\le C\|\Delta s\|^2_{I_i}.
\eea
The proof is completed, once again due to (\ref{sPerturbation}).
\end{proof}

An interesting corollary of Proposition~\ref{prop2} is that under its assumptions the choice of generalized normals, corresponding to $\hat{\mathbf{n}}^j_K=\mathbf{n}(\tilde{\xi}^j_K)$, where $(\mathbf{v}(S_ps^{j-1}_K)-S_p\mathbf{v}^{j-1}_K)\parallel\mathbf{n}(\tilde{\xi}^j_K)$, satisfies (\ref{Normal}). Indeed, $\mathcal{C}$ being closed and convex guarantees the existence of such $\tilde{\xi}^j$ for all $j\ge1$, a small modification of the proof of Proposition~\ref{prop2} implies $|\tilde{\xi}^j_K-S_ps^{j-1}_K|\le C\|\Delta s^{j-1}\|^2$, which finally can be combined with (\ref{GridSmth2b}). With such approximate normals, $s^j=S_ps^{j-1}$, thus the transform is always well-posed and convergent. Moreover, $s(t)\in C^{p-1,1}$ and the restrictive role of $P_e$ in the smoothness analysis is completely overtaken. Unfortunately, $\mathbf{n}(\tilde{\xi}^j)$ cannot be generated by the coarse-scale data $\mathbf{v}^{j-1}$, the reconstruction of $\mathcal{C}$ from $\{\mathbf{v}^0,d^1,\dots\}$ is impossible, so such procedure is not a multi-scale transform. On the other hand it gives a criteria how to measure the regularization properties of $(S_p,N)$ normal MT, namely we can check how well $\mathbf{n}(\tilde{\xi}^j)$ are approximated by the vectors $\hat{\mathbf{n}}^j$ generated via $N$.

From this point of view, Proposition~\ref{prop2} establishes that the normals, generated via $S_{p-2}$ are second order perturbations of $\mathbf{n}(\tilde{\xi}^j)$, while using any other $S_q$, $q\neq p-2$ leads to only first order perturbations. Therefore, in some sense $N=S_{p-2}$ is optimal at least in the class $\{S_p\}$. Whether there is an $N$ outside of the B-spline class that leads to even higher-order approximation of $\mathbf{n}(\tilde{\xi}^j)$ is, to the best knowledge of the author, an open question.
%%%%%%%%%%%%%%%%%%%%%%%%%%%%%%%%%%%%%%%%%%%%%%%%%%%%%%%%%%%%%%%%%%%%%%%%%%%%%%%%%%%%%%%%%
%%%%%%%%%%%%%%%%%%%%%%%%%%%%%%%%%%%%%%%%%%%%%%%%%%%%%%%%%%%%%%%%%%%%%%%%%%%%%%%%%%%%%%%%%
%%%%%%%%%%%%%%%%%%%%%%%%%%%%%%%%%%%%%%%%%%%%%%%%%%%%%%%%%%%%%%%%%%%%%%%%%%%%%%%%%%%%%%%%%

%%%%%%%%%%%%%%%%%%%%%%%%%%%%%%%%%%%%%%%%%%%%%%%%%%%%%%%%%%%%%%%%%%%%%%%%%%%%%%%%%%%%%%%%%
%%%%%%%%%%%%%%%%%%%%%%%%%%%%%%%%%%%%%%%%%%%%%%%%%%%%%%%%%%%%%%%%%%%%%%%%%%%%%%%%%%%%%%%%%
%%%%%%%%%%%%%%%%%%%%%%%%%%%%%%%%%%%%%%%%%%%%%%%%%%%%%%%%%%%%%%%%%%%%%%%%%%%%%%%%%%%%%%%%%
\section{Improved detail decay rate. Combined normal MTs}\label{sec4}

\begin{figure}[htp]
\begin{center}
\includegraphics[width=0.7\textwidth]{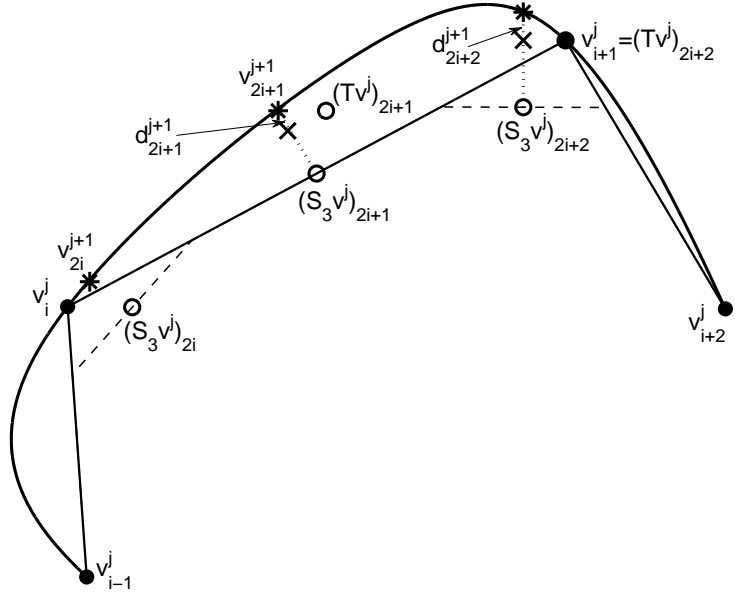}
\caption{ One step of the $(S_3,S_1,T)$ normal MT.} \label{fig:CombinedNormal}
\end{center}
\end{figure}

In this section, influenced by \cite{Oswald} and the local decoupling of parameter and geometry information, we combine two different rules $S$ and $\tilde{S}$ that are used for computing the tangential and the normal component of the predicted point, respectively. We analyze the following procedure: given $\mathbf{v}^{j-1}$ and $K\in\ZZ$, compute $(S\mathbf{v}^{j-1})_K$ and $(\tilde{S}\mathbf{v}^{j-1})_K$. Compute $\hat{\mathbf{n}}^j_K$ via (\ref{Normal2}), where the scheme $N$ is a priori fixed. Then, the predicted point $\hat{\mathbf{v}}_K^{j}$ is the orthogonal projection of $(\tilde{S}\mathbf{v}^{j-1})_K$ onto the normal line $\ell^j_K(t)=(S\mathbf{v}^{j-1})_K+t\hat{\mathbf{n}}^j_K$. In the local frame at $\xi^j_K$, it reads
\be\label{eq:defSTS}
\mathbf{v}_K^j=\hat{\mathbf{v}}_K^{j}+d_K^j\hat{\mathbf{n}}_K^{j};\qquad \hat{x}^j_K=(Sx^{j-1})_K,\quad \hat{y}^j_K=(\tilde{S}y^{j-1})_K.
\ee
We refer to (\ref{eq:defSTS}) as the $(S,N,\tilde{S})$ {\em normal MT} (see Fig.~\ref{fig:CombinedNormal}). In comparison to the $(S,N)$ normal MT, (\ref{eq:defSTS}) involves only additional pre-processing of the data in the normal direction $\hat{\mathbf{n}}^j_K$, subject to $\tilde{S}$, thus the refined data $\mathbf{v}^j$ is the same. Hence, the analysis of well-posedness, convergence, and regularity of the normal re-parameterization depends on $S$ but not on $\tilde{S}$.

\begin{theorem}\label{thm:DetailDecay}
Let $p\ge3$. Let $\mathcal{C}$ be $C^{4}$ strictly convex, and $\mathbf{v}^0\in\mathcal{C}$ contains at least $\lceil \frac{p}{2}\rceil+1$ different points. Let $T$ be a linear subdivision scheme with exact order of polynomial reproduction $P_e\ge4$ and shift parameter $c_{T}=c_{S_p}$. If the $(S_p,S_{p-2},T)$ normal MT of $\mathcal{C}$, with initial data $\mathbf{v}^0$ is convergent, it possesses detail decay rates
$$\|d^j\|=\mathrm{O}(2^{-4j})\quad j\to\infty.$$
\end{theorem}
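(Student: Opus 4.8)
The plan is to exploit the observation made just before the statement: the refined data $\mathbf{v}^j$ of the $(S_p,S_{p-2},T)$ normal MT coincides with that of the plain $(S_p,S_{p-2})$ normal MT, because $T$ only relocates the \emph{predicted} point $\hat{\mathbf{v}}^j_K$ along the normal line, not the curve point $\mathbf{v}^j_K$. Consequently Proposition~\ref{prop11}, Theorem~\ref{thm:Smoothness2}, and above all Proposition~\ref{prop2} apply unchanged: the transform is well-posed, $|\xi^j_K-S_ps^{j-1}_K|=\mathrm{O}(2^{-2j})$, and $\omega^j_K=\bar{s}_K-S_ps^{j-1}_K=\mathrm{O}(2^{-4j})$ by (\ref{sPerturbation}) (which, as remarked there, is valid already for $p\ge3$). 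I would work in the local frame at $\eta:=\xi^j_K$, where $\hat{\mathbf{n}}^j_K=\mathcal{N}$. By (\ref{eq:defSTS}) the predicted point has normal coordinate $(Ty^{j-1})_K$ and the curve point $\mathbf{v}(\bar{s}_K)$ has normal coordinate $y(\bar{s}_K)$, while both share the tangential coordinate $(S_px^{j-1})_K$; subtracting yields the clean identity $d^j_K=y(\bar{s}_K)-(Ty^{j-1})_K$. Since $\bar{s}_K-\eta=\mathrm{O}(2^{-2j})$ and $y$ vanishes to second order at $\eta$ by (\ref{Taylor expansion for s}), the first term is $y(\bar{s}_K)=\mathrm{O}(2^{-4j})$, so the whole statement reduces to proving $(Ty^{j-1})_K=\mathrm{O}(2^{-4j})$.

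For this I would Taylor-expand $y(s)=\tfrac{\mathrm{k}(\eta)}{2}(s-\eta)^2+\tfrac{\mathrm{k}'(\eta)}{6}(s-\eta)^3+\mathbf{r}_2(s,\eta)$ with $|\mathbf{r}_2|\le C|s-\eta|^4$ (this is where $\mathcal{C}\in C^4$ enters), apply $T$ term by term, and insert the grid expansion $s^{j-1}_k-\eta=m\,h_i+q_k$, where $m:=k-\tfrac{K}{2}-c_{S_p}$, $h_i:=\|\Delta s^{j-1}\|_{I_i}\asymp 2^{-j}$, and $q_k=\mathrm{O}(2^{-2j})$ has leading part a fixed quadratic in $m$ with remainder $\mathrm{O}(2^{-3j})$. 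The role of the hypotheses that $T$ has $P_e\ge4$ and $c_T=c_{S_p}$ is exactly that $T$ reproduces every polynomial of degree $\le3$ by its value at the shifted abscissa $\tfrac{K}{2}+c_T=\tfrac{K}{2}+c_{S_p}$, i.e.\ at $m=0$. Expanding $(s^{j-1}_k-\eta)^2$ and $(s^{j-1}_k-\eta)^3$ and discarding genuine $\mathrm{O}(2^{-4j})$ pieces, every surviving contribution is a polynomial in $m$ of degree $\le3$ that carries at least one factor $m$ — since each power $(s-\eta)^n$ with $n\ge1$ supplies the linear term $m\,h_i$ — and is therefore reproduced exactly by $T$ and evaluates to zero at $m=0$. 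Together with $(T\mathbf{r}_2)_K=\mathrm{O}(2^{-4j})$ (from $\|\mathbf{r}_2\|_{I_i}=\mathrm{O}(2^{-4j})$ and boundedness of $T$) this gives $(Ty^{j-1})_K=\mathrm{O}(2^{-4j})$, hence $\|d^j\|=\mathrm{O}(2^{-4j})$.

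It is worth contrasting this with the pure transform, where the same computation for $S_p$ produces, via (\ref{eq:P=2 and 3 generalized}), the nonzero quadratic value $S_p\big((t-\tfrac{K}{2}-c_{S_p})^2\big)_K=\tfrac{p+1}{16}$ at $m=0$; this single defect is precisely the origin of the leading $\mathrm{O}(2^{-2j})$ detail isolated in Proposition~\ref{prop2}. Choosing a normal-direction scheme $T$ that reproduces quadratics and cubics with the \emph{same} shift removes exactly this term, and the matched shift is what forces the remaining cubic and grid-correction terms to vanish at the center rather than leaving an $\mathrm{O}(2^{-3j})$ residue. Note that Lemma~\ref{lemma:Oswald} cannot be applied directly here: $S_p$ has only $P_e=2$, so the crude bound $\|Ty-S_py\|\le C\|\Delta^2 y\|$ is of size $\mathrm{O}(2^{-2j})$ and the cancellation must instead be extracted termwise.

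The step I expect to be the real obstacle is the bookkeeping of the previous paragraph: confirming that all degree-$\le3$ contributions collapse at $m=0$ to within $\mathrm{O}(2^{-4j})$ needs a grid expansion accurate through third order, i.e.\ the rates $\|\Delta^2 s^{j-1}\|=\mathrm{O}(2^{-2j})$ and $\|\Delta^3 s^{j-1}\|=\mathrm{O}(2^{-3j})$. These come from the regularity established in Section~\ref{sec3}; it is here that strict convexity and the $S_{p-2}$-normals pay off, since Proposition~\ref{prop2} delivers both the closeness $|\xi^j_K-S_ps^{j-1}_K|=\mathrm{O}(2^{-2j})$ and the sharp $\omega^j_K=\mathrm{O}(2^{-4j})$ directly, without routing through the generic smoothness bound of Lemma~\ref{lemma:DRS}. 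The one borderline point to watch is $p=3$, where the guaranteed parameter smoothness is only $C^{2,1-}$; one must check that the threshold rate $\mathrm{O}(2^{-4j})$ survives the accompanying $\epsilon$-loss, which is again why the convexity-based estimate $\omega^j_K=\mathrm{O}(2^{-4j})$ is used rather than a derivative count.
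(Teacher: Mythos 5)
Your proposal is correct and shares the paper's skeleton: the same reduction to the identity $d_K=y(\bar{s}_K)-(Ty)_K$ in the local frame at $\xi_K$, the same appeal to Proposition~\ref{prop2} (together with (\ref{sPerturbation}), valid for $p\ge3$) to make $y(\bar{s}_K)=\mathrm{O}(2^{-4j})$, and the same preliminary rates $\|\Delta^n s^j\|=\mathrm{O}(2^{-nj})$, $n\le3$. Where you genuinely diverge is the treatment of $(Ty)_K$. The paper never expands $(Ty)_K$ directly: it changes variables to $y=y(x)$, invokes the proximity Lemma~\ref{lem41} with $M=3$ to get $|y((Tx)_K)-(Ty)_K|=\mathrm{O}(2^{-4j})$, and then uses Lemma~\ref{lemma:Oswald} on the \emph{tangential} data to show $(Tx)_K-(S_px)_K=\mathrm{O}(2^{-2j})$, so that $y((Tx)_K)$, like $y((S_px)_K)$, sits within $\mathrm{O}(2^{-2j})$ of $\xi_K$ and is therefore $\mathrm{O}(2^{-4j})$. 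You instead prove $(Ty)_K=\mathrm{O}(2^{-4j})$ by a termwise Taylor expansion in $s$ and the exact reproduction of cubics by $T$ at the matched shift $c_T=c_{S_p}$. Your route buys a more transparent explanation of why the hypotheses $P_e(T)\ge4$ and $c_T=c_{S_p}$ are exactly what kills the $\tfrac{p+1}{16}h_i^2$ defect of $S_p$, and it avoids having to invert $x(s)$ and check that $y$ is $C^4$ as a function of $x$; its price is that it needs the grid expansion refined one order beyond Remark~\ref{remark1}, namely $s_k=(\text{quadratic polynomial in }k)+\mathrm{O}(2^{-3j})$ on $I_i$. That refinement is not stated in the paper but does follow by discrete summation from $\|\Delta^2 s\|=\mathrm{O}(2^{-2j})$ and $\|\Delta^3 s\|=\mathrm{O}(2^{-3j})$, and you have correctly identified it as the remaining bookkeeping; the paper's use of Lemma~\ref{lem41} is essentially a packaged form of that same bookkeeping. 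Your side remarks (why Lemma~\ref{lemma:Oswald} cannot be applied to $y$ itself, and why the convexity-based bound $\omega^j_K=\mathrm{O}(2^{-4j})$ rather than a smoothness count is used at $p=3$) accurately reflect the paper's logic.
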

\begin{proof}
Due to Theorem~\ref{thm:Smoothness2}, we have $\|\omega^j\|\le C2^{-4j}$. Let $n\in\{1,2,3\}$, and denote by $\rho:=2^{n-4}<1$. From (\ref{eq:DerivedSp}), combined with $\|S_q\|=1$, $\forall q\ge0$, it follows that
\be\label{eq:delta s}
\begin{split}
\|\Delta^n s^j\|&\le2^{-n}\|\Delta^n s^{j-1}\|+2^n\|\omega^j\|\le\dots\le2^{-nj}\|\Delta^n s^0\|+C2^n\sum_{\ell=0}^{j-1}2^{-n\ell-4(j-\ell)}\\
&\le 2^{-nj}\left(\|\Delta^n s^0\|+\frac{C2^n}{1-\rho}\right)=C2^{-nj}.
\end{split}
\ee
Again, the constant $C$ changed its value throughout the computation, and this will happen quite often in the proof. What matters is that $C$ remains uniformly bounded and independent on $j$.

It suffices to work with a single refinement step. To simplify notation, let $\mathbf{v}^{j}=\mathbf{v}$, while $\mathbf{v}^{j+1}=\bar{\mathbf{v}}.$   Fix $K\in\ZZ$, let $i\in\ZZ$ be such that $I_K\subset I_{2i}\cup I_{2i+1}$. Let $\xi_K$ be as in (\ref{Normal}), and consider the local frame at it. Due to (\ref{Taylor expansion for s}) and (\ref{eq:delta s}) we have
\be\label{eq:delta x}
\|\Delta^n x\|_{I_i}\le C2^{-nj}.
\ee
The result we want to show is of asymptotical nature while $\|\Delta s^j\|\asymp 2^{-j}$, so without loss of generality we can assume that $x(s)$ is monotonic, thus locally invertible within the interval $s_{I_i}$ and $y(x)=y(s(x))$ makes sense. Applying Lemma~\ref{lem41} for the $C^4$ function $y(x)$, the scheme $T$, and $M=3$ we derive
\be\label{eq:sec62}|y((Tx)_K)-(Ty)_K|\le C \left(\sum_{\nu\in E_3} \prod_{m=1}^3\|\Delta^m x\|_{I_i}^{\nu_m} + \|\Delta x\|_{I_i}^4\right)\le C2^{-4j},
\ee
where $C$ depends only on $\mathcal{C}$ and $T$.
Now, (\ref{GridSmth2}) implies
$$x_k=x_i+(k-i)\|\Delta x\|_{I_i}+ \mathbf{r}_{k,i}, \qquad |\mathbf{r}_{k,i}|\le C_32^{-2j},\qquad\forall k\in I_i,$$
which, due to Lemma~\ref{lemma:Oswald} gives rise to
\be\label{eq:sec63}|(S_px-Tx)_K|\le C2^{-2j}.\ee
From Proposition~\ref{prop2}
\bea
y((S_px(s))_K)&=&y(\bar{s}_K)=\frac{\mathrm{k}(\xi_K)}{2}(\bar{s}_K-\xi_K)^2+\dots\le C2^{-4j};\\
y((Tx(s))_K)&=&y(\bar{s}_K+\mathrm{O}(\|\Delta s\|_{I_i}^{2}))\le C2^{-4j}.
\eea
Finally, combining the last result with (\ref{eq:sec62}) we conclude
\bea
|d_K|=|y((S_px)_K)-(Ty)_k|\le|y((Tx)_K)-(Ty)_K|+|y((S_px)_K)-y((Tx)_K)|\le C2^{-4j}.
\eea
The proof is completed.
\end{proof}

\begin{table}[htb]
\begin{center}
\begin{tabular}{|c|c|c|c|c|c|c|c|c|c|c|}
\hline
\multicolumn{11}{|c|}{Estimation of the detail decay order via computing $-\log_2(\|d^j\|)/j$ for each level $j$ }\\
\hline
Normal MT & $j=1$ & $j=2$ & $j=3$ & $j=4$ & $j=5$ & $j=6$ & $j=7$ & $j=8$ & $j=9$ & $j=10$\\
\hline
\multirow{1}{*}{$(S_3, S_1, T_3)$}  & 5.4040 & 4.7841 & 4.5647 & 4.4079 & 4.3217 & 4.2681 & 4.2313 & 4.2041 & 4.1831 & 4.1649 \\
& 2.0019  & 2.4977 & 2.8437 & 3.0704 & 3.2251 & 3.3354 & 3.4184 & 3.4830 & 3.5347 & 3.5770 \\
\hline
\multirow{1}{*}{$(S_5, S_3, T_5)$}  & 5.2305 & 5.1804 & 4.9641 & 4.7860 & 4.6559 & 4.5629 & 4.4926 & 4.4379 & 4.3938 & 4.3502 \\
& 1.9547  & 3.0385 & 3.3760 & 3.5156 & 3.5967 & 3.6549 & 3.6982 & 3.7318 & 3.7586 & 3.7805\\
\hline
\multirow{1}{*}{$(S_7, S_5, T_7)$}  & 5.0745 & 5.1475 & 4.9214 & 4.7465 & 4.6244 & 4.5357 & 4.4689 & 4.4168 & 4.3747 & 4.3304 \\
& 1.8226  & 3.0567 & 3.3541 & 3.4896 & 3.5766 & 3.6375 & 3.6829 & 3.7182 & 3.7464 & 3.7694 \\
\hline
\end{tabular}
\caption{Numerical verification that the detail decay rate of a combined scheme cannot exceed 4. The unit circle and two sets of initial data - uniform samples of $\pi(x+x^2)$ with step size 0.01 (upper results) and 0.1 (lower results) have been considered.}\label{table53}
\end{center}
\end{table}

In applications, it is highly unlikely that the initial curve $\mathcal{C}$ is convex. However, following the discussion after Proposition~\ref{prop11}, if $\mathbf{v}^0(t)$ approximates $\mathcal{C}$ well, then Theorem~\ref{thm:DetailDecay} implies that away from flat regions of $\mathcal{C}$, where the details are negligible anyway, the combined action of $S_p$ and a suitable $T$ improves twice the compression rate of the pure $S_p$ transform for any closed, non-self-intersecting, $C^4$ curve. For example, the $(S_p,S_{p-2},T)$ normal MT can be successfully incorporated in the adaptive framework \cite{Harizanov}.

Regarding the choice of $T$, suitable candidates seem to be the family $\{T_p\}_{p\ge3}$ of the $2p$-point interpolatory Deslauriers-Dubuc schemes $T_{2p-1}$ \cite{DD}, and the odd counterpart $T_{2p}$ of the dual $2p$-point schemes of Dyn et al. \cite{DynFloaterHormann}. For them we have $c_{T_{2p}}=0$, $c_{T_{2p+1}}=1/4$. Note that, to apply Theorem~\ref{thm:DetailDecay} we need the centered versions $S'_p$ of $S_p$ mentioned in Section~\ref{sec2}, since $c_{S'_{2p}}=0$, $c_{S'_{2p+1}}=1/4$. Until the end of this section we always deal with $S'_p$, but for simplicity we will keep the notation $S_p$.

We already mentioned that the order 2 in (\ref{eq:NormalProximity}) can not be improved, thus neither the order 4 in Theorem~\ref{thm:DetailDecay}. This is experimentally confirmed by the results in Table~\ref{table53}. On the unit circle (which is $C^{\infty}$ and of constant curvature, thus the ``nicest'' possible example) we consider two different irregular initial data samples $\mathbf{v}^0$ and $\hat{\mathbf{v}}^0$, such that the corresponding $s^0$ and $\hat{s}^0$ are just the projection of the quadratic polynomial $\pi(x+x^2)$ on the regular grids $10^{-2}\ZZ$ and $10^{-1}\ZZ$, respectively. Due to periodicity, $\mathbf{v}^0_{i+100}=\mathbf{v}^0_{i}$, and $\hat{\mathbf{v}}^0_{i+10}=\hat{\mathbf{v}}^0_{i}$, so we have 100 distinct points in $\mathbf{v}^0$ and 10 distinct points in $\hat{\mathbf{v}}^0$. For both the initial samples we perform 10 steps of the $(S_p,S_{p-2},T_p)$ normal MTs with $p=3,5,7$.
%We consider only odd $p$, because for the schemes $T_p$ are well known and we do not have to compute their masks.
Note that $P_e(T_p)=p+1\ge4$, so it also does not play a restrictive role for the detail decay rate. At each refinement step we store the quantity $-\log_2(\|d^j\|)/j$ that corresponds to the detail decay order $\mu$. For each of the three transforms we observe the same phenomena. Applied on the denser sample $\mathbf{v}^0$, the values of the logarithm monotonically decrease, while applied on the coarser sample $\hat{\mathbf{v}}^0$, the values of the logarithm monotonically increase with each refinement. Both the sequences tend to 4.

A remarkable corollary of Theorem~\ref{thm:Smoothness1} and Theorem~\ref{thm:DetailDecay} is the case $p=3$, where $T_3$ is the famous 4-point scheme
\bea
(T_3x)_{2i}=x_i,\qquad(T_3x)_{2i+1}=\frac{-x_{i-1}+9x_{i}+9x_{i+1}-x_{i+2}}{16},\qquad i\in\ZZ.
\eea
Note that the $(S_3,S_1,T_3)$ normal MT can be viewed as a particular generalization of $T_3$, a direction that has been actively explored for many years now. This generalized 4-point normal MT performs in an optimal way with respect to both smoothness of the normal re-parameterization and high detail decay rates. Indeed, $\|\omega^j\|\le C2^{-4j}$ was established in the proof of Theorem~\ref{thm:Smoothness2} and the generalized version of Lemma~\ref{lemma:DRS} in \cite{HarizanovPHD} implies $s(t)\in C^{2,1}$.) Furthermore, $P_e(S_3)=2$ so the $(S_3,S_1)$ normal MT has detail decay rate 2, i.e., $\|d^j\|=\mathrm{O}(2^{-2j})$, while $T_3$ generates $C^{1,1}$ limits so the $T_3$ normal MT has detail decay rate 3 up to a logarithmic factor, i.e., $\|d^j\|=\mathrm{O}(j2^{-3j})$ \cite[Section 7.1.2]{DRS}. Thus, their combined action improves the detail decay order of each one of them, if applied alone. The only drawback of the combined transform is that it remains approximating, and thus, twice as many details as for the $T_3$ normal MT are stored. Last, but not least, there is strong numerical evidence that the above phenomena holds in 3D, as well. Indeed, in \cite[Section 4]{Oswald} the $(S_3,S_1,T_3)$ normal MT analogue for triangulated surfaces has been proposed, where $S_3$ has been replaced by the Loop subdivision scheme, while $T_3$ has been replaced by the Butterfly subdivision scheme. Experimented on irregular data samples of the unit sphere, the combined Loop/Butterfly normal MT gives rise to detail decay rate of order 4 away from extraordinary vertices.
%%%%%%%%%%%%%%%%%%%%%%%%%%%%%%%%%%%%%%%%%%%%%%%%%%%%%%%%%%%%%%%%%%%%%%%%%%%%%%%%%%%%%%%%%
%%%%%%%%%%%%%%%%%%%%%%%%%%%%%%%%%%%%%%%%%%%%%%%%%%%%%%%%%%%%%%%%%%%%%%%%%%%%%%%%%%%%%%%%%
%%%%%%%%%%%%%%%%%%%%%%%%%%%%%%%%%%%%%%%%%%%%%%%%%%%%%%%%%%%%%%%%%%%%%%%%%%%%%%%%%%%%%%%%%

%%%%%%%%%%%%%%%%%%%%%%%%%%%%%%%%%%%%%%%%%%%%%%%%%%%%%%%%%%%%%%%%%%%%%%%%%%%%%%%%%%%%%%%%%
%%%%%%%%%%%%%%%%%%%%%%%%%%%%%%%%%%%%%%%%%%%%%%%%%%%%%%%%%%%%%%%%%%%%%%%%%%%%%%%%%%%%%%%%%
%%%%%%%%%%%%%%%%%%%%%%%%%%%%%%%%%%%%%%%%%%%%%%%%%%%%%%%%%%%%%%%%%%%%%%%%%%%%%%%%%%%%%%%%%

%%%%%%%%%%%%%%%%%%%%%%%%%%%%%%%%%%%%%%%%%%%%%%%%%%%%%%%%%%%%%%%%%%%%%%%%%%%%%%%%%%%%%%%%%
%%%%%%%%%%%%%%%%%%%%%%%%%%%%%%%%%%%%%%%%%%%%%%%%%%%%%%%%%%%%%%%%%%%%%%%%%%%%%%%%%%%%%%%%%
%%%%%%%%%%%%%%%%%%%%%%%%%%%%%%%%%%%%%%%%%%%%%%%%%%%%%%%%%%%%%%%%%%%%%%%%%%%%%%%%%%%%%%%%%
\bibliography{BSplineNormalMT}
\bibliographystyle{plain}
%%%%%%%%%%%%%%%%%%%%%%%%%%%%%%%%%%%%%%%%%%%%%%%%%%%%%%%%%%%%%%%%%%%%%%%%%%%%%%%%%%%%%%%%%
%%%%%%%%%%%%%%%%%%%%%%%%%%%%%%%%%%%%%%%%%%%%%%%%%%%%%%%%%%%%%%%%%%%%%%%%%%%%%%%%%%%%%%%%%
%%%%%%%%%%%%%%%%%%%%%%%%%%%%%%%%%%%%%%%%%%%%%%%%%%%%%%%%%%%%%%%%%%%%%%%%%%%%%%%%%%%%%%%%%
%%%%%%%%%%%%%%%%%%%%%%%%%%%%%%%%%%%%%%%%%%%%%%%%%%%%%%%%%%%%%%%%%%%%%%%%%%%%%%%%%%%%%%%%%
%%%%%%%%%%%%%%%%%%%%%%%%%%%%%%%%%%%%%%%%%%%%%%%%%%%%%%%%%%%%%%%%%%%%%%%%%%%%%%%%%%%%%%%%%
%%%%%%%%%%%%%%%%%%%%%%%%%%%%%%%%%%%%%%%%%%%%%%%%%%%%%%%%%%%%%%%%%%%%%%%%%%%%%%%%%%%%%%%%%
%%%%%%%%%%%%%%%%%%%%%%%%%%%%%%%%%%%%%%%%%%%%%%%%%%%%%%%%%%%%%%%%%%%%%%%%%%%%%%%%%%%%%%%%%
%%%%%%%%%%%%%%%%%%%%%%%%%%%%%%%%%%%%%%%%%%%%%%%%%%%%%%%%%%%%%%%%%%%%%%%%%%%%%%%%%%%%%%%%%
%%%%%%%%%%%%%%%%%%%%%%%%%%%%%%%%%%%%%%%%%%%%%%%%%%%%%%%%%%%%%%%%%%%%%%%%%%%%%%%%%%%%%%%%%
\end{document}